\newif\ifmarkauthors
  \definecolor{darkred}{RGB}{139,0,0}
  \definecolor{darkgreen}{RGB}{0,100,0}
  \definecolor{darkmagenta}{RGB}{139,0,139}
  \definecolor{darkorange}{RGB}{190,70,20}
  \def\cbdelete[#1]{}
  \def\cbdelete[#1]{}
\newtheorem{theorem}{Theorem}
\newtheorem{remark}[theorem]{Remark}
\newtheorem{lemma}[theorem]{Lemma}
\newtheorem{proposition}[theorem]{Proposition}
\newtheorem{corollary}[theorem]{Corollary}
\newtheorem{example}[theorem]{Example}
\def\reals{\mathbb{R}}
\def\natural{\mathbb{N}}
\def\bbN{\mathbb{N}}
\def\il{\left<}
\def\ir{\right>}
\def\eps{\varepsilon}
\def\e{\varepsilon}
\newcommand{\ba}{{\boldsymbol{a}}}
\newcommand{\bx}{{\boldsymbol{x}}}
\newcommand{\setu}{{\mathfrak{u}}}
\newcommand{\setv}{{\mathfrak{v}}}
\newcommand{\setU}{{\mathcal{U}}}
\newenvironment{proof}{\begin{trivlist}\item[\hskip\labelsep{\bf Proof.}]}{$\hfill\Box$\end{trivlist}}
\newcommand{\ex}{{\mathrm{e}}}
\newcommand{\rd}{{\mathrm{d}}}
\newcommand{\calA}{{\mathcal{A}}}
\newcommand{\calD}{{\mathcal{D}}}
\newcommand{\calF}{{\mathcal{F}}}
\newcommand{\calI}{{\mathcal{I}}}
\newcommand{\calL}{{\mathcal{L}}}
\newcommand{\calU}{{\mathcal{U}}}
\newcommand{\bsz}{{\boldsymbol{z}}}
\newcommand{\bsi}{{\boldsymbol{i}}}
\newcommand{\bsx}{{\boldsymbol{x}}}
\newcommand{\bsy}{{\boldsymbol{y}}}
\newcommand{\bsDelta}{{\boldsymbol{\Delta}}}
\newcommand{\bszero}{{\boldsymbol{0}}}
\newcommand{\bsgamma}{{\boldsymbol{\gamma}}}
\newcommand{\bsalpha}{{\boldsymbol{\alpha}}}
\newcommand{\bbZ}{{\mathbb{Z}}}
\newcommand{\bbR}{{\mathbb{R}}}
\newcommand{\mask}[1]{}
\title{Infinite-dimensional integration\\
and the multivariate decomposition method}
\author{F. Y. Kuo, D. Nuyens, L. Plaskota, I. H. Sloan, and G. W. Wasilkowski}
\date{9 September 2016}
\begin{document}

\maketitle

\begin{abstract}
We further develop the \emph{Multivariate Decomposition Method} (MDM) for
the Lebesgue integration of functions of infinitely many variables $x_1,x_2,x_3,\ldots$
with respect to a corresponding product of a one dimensional probability measure.
The method is designed for functions that admit a dominantly convergent decomposition
$f=\sum_\setu f_\setu$, where $\setu$ runs over all finite subsets of positive integers, and
for each $\setu=\{i_1,\ldots,i_k\}$ the function $f_\setu$ depends only on
$x_{i_1},\ldots,x_{i_k}$.

Although a number of concepts of
infinite-dimensional integrals have been used in the literature, questions
of uniqueness and compatibility have mostly not been studied.
We show that, under appropriate convergence conditions, the Lebesgue integral
equals the `anchored' integral, independently of the anchor.

For approximating the integral, the MDM assumes that point
values of $f_\setu$ are available for important subsets $\setu$, at some
known cost. In this paper we introduce a new setting, in which it is
assumed that each $f_\setu$ belongs to a normed space $F_\setu$, and that
bounds $B_\setu$ on $\|f_\setu\|_{F_\setu}$ are known. This contrasts with
the assumption in many papers that weights $\gamma_\setu$, appearing in
the norm of the infinite-dimensional function space, are somehow known.
Often such weights $\gamma_\setu$ were determined by minimizing an error
bound depending on the $B_\setu$, the $\gamma_\setu$ \emph{and} the chosen
algorithm, resulting in weights that depend on the algorithm. In contrast,
in this paper only the bounds $B_\setu$ are assumed known. We give two
examples in which we specialize the MDM: in the first case $F_\setu$ is
the $|\setu|$-fold tensor product of an anchored reproducing kernel
Hilbert space, and in the second case it is a particular non-Hilbert space
for integration over an unbounded domain.
\end{abstract}

\section{Introduction} \label{sec:intro}

This paper is intended as a contribution, both theoretical and practical,
to the challenging task of numerical integration of multivariate
functions, when the number of variables is large, and even infinite.

High-dimensional integration has emerged in recent years as a significant
new direction for numerical computation, see \cite{DST16}. On the one hand
the improved processing power of computers has encouraged the practical
computation of multivariate integrals with very large numbers of
variables, into the hundreds or thousands or tens of thousands. On the
other hand such problems will never become trivial -- indeed, many
important high-dimensional problems (see below for an example) contain
parameters for which physically interesting choices can lead to problems
of unlimited difficulty.

Many papers have been written in recent decades about high-dimensional
integration, see the reviews \cite{BG04} for sparse grid methods and
\cite{DKS13} for Quasi-Monte Carlo methods, and their many references.

In many applications the number of variables is not merely large
but in principle infinite.  This is the case in the important class of
problems of elliptic partial differential equations with uncertain
coefficients. A key example is Darcy flow through a porous medium with
highly variable permeability (see e.g., \cite{GKNSS11}), with the
permeability modelled as a random field. Since a continuous random field
requires an infinite number of scalar random variables for its
description, the expected value of any property of the flow is in
principle an infinite-dimensional integral.  Of course in practice the
infinite-dimensional integral has to be truncated to a finite-dimensional
integral, but if the correlation length of the permeability field is
small, or the variance is large, then the dimensionality might need to be
very large indeed to capture the essential physics.

Some other recent papers devoted to infinite-dimensional integration are
\cite{Bal12,BalGne14,CDMK09,DicGne14a,DicGne14b,Gne10,Gne12,Gne13,GMR14,
HMNR10,HW01,KSWW10a,NiuHic10,NHMR10,PW11,PW14,PWW00,
Was12a,Was12b,Was13a,Was14a,Was14b,WW96,WW11a,WW11b}.

The theoretical setting in most of these papers (and the theoretical
setting is of key importance, given the general lack of useful intuition
in high dimensions) has been that of ``weighted'' reproducing kernel
Hilbert spaces, where the weights enter the norm of the function space.
Often the spaces are tensor products of one-dimensional reproducing kernel
Hilbert spaces, and the ``weights'' are of the ``product'' kind introduced
by Sloan and and Wo\'zniakowski \cite{SW98}, in which there is one weight
$\gamma_j$ for each variable $x_j$, with $\gamma_1 \ge \gamma_2 \ge \cdots
> 0$, with the decreasing weights reflecting the decreasing importance of
the successive variables. This has been extended to ``general weights'',
in which case there is a potentially different non-negative weight
$\gamma_\setu$ for each finite subset $\setu$ of the natural numbers; the
product weight case  is then recovered by setting
$\gamma_\setu=\prod_{j\in\setu}\gamma_j$. In these papers it is assumed
that the integrand is expressible in the form
\begin{equation} \label{eq:decomp}
 f(\bsx) \,=\, \sum_{|\setu|<\infty}f_\setu(\bsx)
 \,=\, \sum_{|\setu|<\infty}f_\setu(\bsx_\setu),
\end{equation}
where the sum (see also \eqref{eq:sumlim} below) is over all finite
subsets $\setu\subset\bbN:=\{1,2,\ldots\}$, with $|\setu|$ denoting the
cardinality of $\setu$, and each function $f_\setu$ depends only on the
subset of the variables in $\setu$ and we write $\bsx_\setu:=\{x_j:j\in
\setu\}$. Furthermore, $f_\setu$ is assumed to belong to a normed space
$F_\setu$, which is usually a reproducing kernel Hilbert space. The
weights determine the importance of different subsets $\setu$ through
their appearance in a norm of the form
\begin{equation}\label{eq:papnorm}
 \Bigg(\sum_{|\setu|<\infty}
 \bigg(\frac{\|f_\setu\|_{F_\setu}}{\gamma_\setu}\bigg)^{q}\Bigg)^{1/q}
\end{equation}
in which case the function $f$ belongs to a Banach space
$\calF=\calF_{\bsgamma,q}$.  Commonly $q=2$, in which case $\calF$ is a
Hilbert space, and some papers replace $\gamma_\setu$ by
$\gamma_\setu^{1/2}$.

Throughout this paper we shall use the convention that in infinite sums
over subsets as in \eqref{eq:decomp} and~\eqref{eq:papnorm} the terms are
to be ordered in terms of increasing truncation dimension,
\begin{equation} \label{eq:sumlim}
 \sum_{|\setu|<\infty}f_\setu(\bsx)
 \,:=\, \lim_{d\to\infty}\sum_{\setu\subseteq\{1,\ldots,d\}}f_\setu(\bsx),
\end{equation}
with the additional subsets when $d$ increases to $d+1$ ordered as for the
case $d$ with respect to the original members.

The \emph{Multivariate Decomposition Method} (MDM) proposed in
\cite{Was13a,Was14a,PW14} is a generalisation of the \emph{Changing
Dimension Algorithm} in \cite{KSWW10a,PW11}. Here too it is assumed that
an expansion of the form \eqref{eq:decomp} exists, and that values of
$f_\setu(\bsx_\setu)$, while not available explicitly, can be obtained by
a modest number of evaluations of $f(\bsx)$.  In Section~\ref{sec:applic1}
we give specific examples in which values of $f_\setu(\bsx_\setu)$ can be
obtained from at most $2^{|\setu|}$ evaluations of $f(\bsx)$ -- an
acceptably small number if the cardinality $|\setu|$ is small.

In previous MDM papers \cite{Was13a,Was14a,PW14} the norm was assumed to
be of the form \eqref{eq:papnorm}. In the present paper, in contrast, we
do not assume that weights $\gamma_\setu$ are given as \emph{a priori}
information.  Rather, we assume that the terms $f_\setu$ in the expansion
\eqref{eq:decomp} belong to normed function spaces $F_\setu$, and
crucially, that upper bounds on the norms $\|f_\setu\|_{F_\setu}$ are
known, i.e., that for $|\setu|<\infty$ numbers $B_\setu$ satisfying
\begin{align} \label{eq:Bu}
  \|f_\setu\|_{F_\setu} \,\le\, B_\setu
\end{align}
are given as \emph{a priori} information. This new setting is equivalent
to putting $q=\infty$ and $\gamma_\setu = B_\setu$ in \eqref{eq:papnorm},
but in all other cases the two approaches are not equivalent. Given the
bounds $B_\setu$, one is of course free to select a value of $q$ and a set
of weights $\gamma_\setu$ that make \eqref{eq:papnorm} finite, but in
neither case is the choice unique, or is one choice obviously better than
another. In the present work we make no explicit use of weights.

The advantage of the current setting, in which bounds $B_\setu$ rather
than weights $\gamma_\setu$ are specified, lies in its immediate
applicability once such bounds are known. We note that a number of recent
papers have provided directly useable bounds $B_\setu$: for partial
differential equations with random coefficients, see \cite{CDS10} for the
case of uniformly distributed stochastic variables, and
\cite{GKNSSS15,HPS} for the lognormal case; and for generalized response
models in statistics, see \cite{SKS13}. It seems likely that similar
bounds will be found for other applications in the future. In contrast, in
practical situations it is typically not clear how to choose weights
$\gamma_\setu$, or product weights $\gamma_j$. We recall that weights were
originally introduced (in \cite{SW98}) to provide a setting in which the
tractability of multivariate integration (roughly, to know what happens to
the worst-case error as $d\to\infty$) could be studied. Never was it
claimed that weights were naturally available in an application. Some
recent papers have obtained formulas for suitable weights, but these
``optimal weights'' are deduced by minimising an error bound which depends
on the $B_\setu$, the $\gamma_\setu$ \emph{and} the chosen algorithm. See
\cite{KSS12,KSS15} and \cite{GKNSSS15,KSSSU} for the PDE with random
coefficients and randomly shifted lattice rules in the uniform and
lognormal cases respectively, and \cite{DKLNS14,DKLS} for the uniform case
with higher order digital nets, as well as \cite{KN} for a survey of these
results. The dependence of weights on the algorithm is unacceptable from
the point of view of information based complexity, where the complexity of
the problem (i.e., integration in $\calF_{\bsgamma,q}$, specified
by~\eqref{eq:papnorm}, and thus depending on the $\gamma_\setu$) is
supposed to be studied independently of any algorithm. In contrast, we
believe that the present setting will provide a robust basis not only for
the development of computational schemes, but also for future complexity
and tractability studies of high-dimensional integration.

\smallskip
The problem to be considered is that of integration
over an infinite-dimensional product region,
\begin{equation}\label{eq:Lebesgue}
  \calI(f)\,:=\,\int_{D^\bbN} f(\bsx)\,\rd\mu(\bsx),
\end{equation}
where the integral is in the Lebesgue sense.
Here $\mu$ is the countable product, $\mu=\times_{j=1}^\infty\mu_1$, of
a one-dimensional probability measure $\mu_1$ defined on a Borel set
$D\subseteq\bbR$, and $D^\bbN$ is the set of all infinite sequences
$\bsx=(x_1,x_2,x_3,\ldots)$ with $x_j\in D$. We assume that $\mu_1$
is determined by a probability density $\rho$ on $D$.

At this point it is worth mentioning that many papers define
the infinite-dimensional integral as
\begin{equation}\label{eq:diffdef}
   \calI_a(f) \,:=\, \lim_{d\to\infty}\int_{D^d}f(x_1,\ldots,x_d,a,a,\ldots)\,
    \rd\mu_d (x_1,\ldots,x_d),
\end{equation}
where $\mu_d=\times_{j=1}^d\mu_1$,
for some fixed $a\in D$, usually with the `anchor' taken as $a=0$.
Taken on its own that definition seems open to question if the value of
the integral could depend on the choice of the anchor $a$. We address
this uniqueness concern in Section \ref{sec:definingint}, where sufficient 
conditions are given to ensure that the equality
$\calI(f)=\calI_a(f)$ holds independently of the choice of $a$.

\smallskip
To approximate the integral \eqref{eq:Lebesgue}, the MDM uses the
decomposition of $f$ given in \eqref{eq:decomp}. Indeed, assuming that the
partial sums in \eqref{eq:sumlim} converge dominantly to $f$, we have
\begin{align} \label{eq:int}
  \calI(f)
 \,=\, \lim_{d\to\infty}\sum_{\setu\subseteq\{1,\dots,d\}}
       \int_{D^{|\setu|}} f_\setu(\bsx_\setu)\,\rd\mu_{|\setu|}(\bsx_\setu)
 \,=:\,\lim_{d\to\infty}\sum_{\setu\subseteq\{1,\dots,d\}} I_\setu (f_\setu)
 \,=\, \sum_{|\setu|<\infty} I_\setu (f_\setu).
\end{align}
The essence of MDM is that a separate quadrature rule (which in all but a
finite number of cases will be the zero approximation) is applied to each
term $f_\setu$ in the decomposition of~$f$. In more detail, the overall
algorithm $\calA_\e$ for approximating $\calI(f)$ up to an error
request~$\eps$ has the form
\begin{equation} \label{eq:alg}
  \calA_\e(f) \,:=\, \sum_{\setu\in \calU(\e)} A_{\setu,n_\setu}(f_\setu)\,:=\,
   \sum_{\setu\in \calU(\e)}\sum_{i=1}^{n_\setu} w_{\setu,i}\, f_\setu(\bsx_{\setu, i}),
\end{equation}
where the \emph{active set} $\calU(\e)$ is a finite set of finite subsets
of $\bbN$, and $n_\setu$, $\bsx_{\setu,i}$ and $w_{\setu,i}$ are
parameters of the quadrature rule $A_{\setu,n_\setu}$.  In effect, the
contributions to $\calI(f)$ from terms $f_\setu$ with $\setu$ outside the
active set are approximated by zero and thus the construction of
$\calU(\e)$ depends on the error request~$\e$.  We note that both the
active set $\calU(\eps)$ and the algorithm $\calA_\eps$ are intended to be
independent of the particular function $f\in\calF$ once the bounds
$B_\setu$ are provided.

Clearly, the selection of the active set $\calU(\e)$ and the determination
of the quadrature rules $A_{\setu,n_\setu}$ for $\setu\in\calU(\e)$ are
key ingredients of the MDM.  To make the selections in a rational way we
need to assume not only that we have \emph{a priori} information about the
size of the terms $f_\setu$ in the expansion of~$f$ in the form of the
upper bounds $B_\setu$, but also that we are provided with suitable
information about the difficulty of the integration problem and the
quality of the quadrature rules in $F_\setu$.

For two specific applications, we develop an MDM whose worst-case error is
upper bounded by $\e^{1-\delta(\e)}$ and the information cost is
proportional to $(1/\e)^{1+\delta(\e)}$ where $\delta(\e)>0$ and
$\delta(\e)\to0$, under quite general assumptions about
the bounds $B_\setu$ and the cost of
function evaluations. This means that for the given application the MDM is
almost optimal since even for the corresponding space of univariate
functions, the minimal cost of computing an $\e$-approximation is
proportional to $1/\e$ for these two applications.

The content of the paper is as follows. In Section~\ref{sec:definingint} we
discuss relations between the Lebesgue and `anchored' integrals.
Then in Section~\ref{sec:setting} we describe the setting for the MDM and in
Section~\ref{sec:mdm} we develop the MDM in its general form. Then in
Section~\ref{sec:applic1} we turn to an important application, one that
provides the initial motivation for the method. This is the case of the
so-called ``anchored decomposition'' associated with anchored reproducing
kernel Hilbert spaces, which have very often been used in studies of
multivariate integration and approximation. We shall see that in this case
all the assumptions of the MDM are satisfied, and that there is a
significant class of integration problems for which the MDM can be highly
efficient. In Section~\ref{sec:SecondAppl} we consider another
application, this one of a non-Hilbert space nature. Efficient
implementation of MDM will be considered in a forthcoming
paper\cite{GKNW}.

\section{Lebesgue integral and `anchored' integral}\label{sec:definingint}

In this section, we compare the Lebesgue integral \eqref{eq:Lebesgue} with
the `anchored' integral \eqref{eq:diffdef} and show in particular that, under suitable
assumptions, they are equivalent.

Recall that $D$ is a Borel subset of $\bbR$ and $D^\bbN$, where
$\bbN=\{1,2,3,\ldots\}$, is the set of all infinite sequences/points
$\bsx =(x_1,x_2,x_3,\dots)$ with each $x_j\in D$. Furthermore, $\mu_1$ is
a probability measure on the Borel $\sigma$-field of $D$, and we denote by
$\mu_d=\times_{j=1}^d\mu_1$ the $d$-product of $\mu_1$ on $D^d$, and by
$\mu=\times_{j=1}^\infty\mu_1$ the countable product of $\mu_1$ on $D^\bbN$.
By Lebesgue integral of a function $f:D^\bbN\to\bbR$ we mean the integral with
respect to $\mu$, and $a.e.$ means 
\emph{almost everywhere with respect to $\mu$.}

In general, the Lebesgue and `anchored' integrals are quite different.
Moreover, `anchored' integrals may depend on $a$. A simple
example is provided by the function $f:\bbR^\bbN\to\bbR$ such that
$f(\bsx)=0$ if $\bsx$ has only finitely many non-zero coordinates, and
$f(\bsx)=1$ otherwise. Indeed, then the limit in \eqref{eq:diffdef} is $0$
for $a=0,$ and $1$ for $a=1$. On the other hand, for the integral
\eqref{eq:Lebesgue} we have $\calI(f)=1$.

\smallskip
The following well-known {\em Lebesgue's dominated convergence
theorem}, see, e.g., \cite[Sect.\,26]{Halmos74}, will play an important
role in our considerations.
\begin{theorem}\label{thm:DCT}
Let $\{f_d\}_{d\ge 0}$ be a sequence of integrable functions that
converges dominantly to $f$, i.e.,
\begin{itemize}
\item[\textnormal{(i)}] $\lim_{d\to\infty}f_d(\bsx)=f(\bsx)\;
                           \mbox{ for }\;\bsx\;a.e.$
\item[\textnormal{(ii)}] for some integrable function $g$ we have
                   $|f_d(\bsx)|\le |g(\bsx)|\;
                        \mbox{ for }\; \bsx\;a.e.$
\end{itemize}
Then
\begin{equation}\label{efdilimit}
    \calI(f)\,=\,\lim_{d\to\infty}\calI(f_d).
\end{equation}
\end{theorem}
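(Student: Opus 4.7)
The plan is to deduce the conclusion from Fatou's lemma, applied twice, to the nonnegative sequences $g+f_d$ and $g-f_d$. First I would dispose of the preliminaries: $f$ inherits measurability as an a.e.\ pointwise limit of measurable functions (working in the completed measure space), and, after taking a countable union of null sets, the bound $|f_d(\bsx)|\le|g(\bsx)|$ holds for all $d$ outside a single null set, so it passes to the limit to give $|f|\le|g|$ a.e. This makes $f$ integrable and $\calI(f)$ well-defined, since $\int|f|\,\rd\mu\le\int|g|\,\rd\mu<\infty$.

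For the two main inequalities, note that $g+f_d\ge 0$ a.e.\ and $g+f_d\to g+f$ a.e., so Fatou's lemma yields
\[
\int (g+f)\,\rd\mu \,\le\, \liminf_{d\to\infty}\int(g+f_d)\,\rd\mu.
\]
Subtracting the finite quantity $\int g\,\rd\mu$ from both sides gives $\calI(f)\le\liminf_{d\to\infty}\calI(f_d)$. An identical argument applied to the nonnegative sequence $g-f_d$, which converges a.e.\ to $g-f$, produces
\[
\int (g-f)\,\rd\mu \,\le\, \liminf_{d\to\infty}\int(g-f_d)\,\rd\mu \,=\, \int g\,\rd\mu \,-\, \limsup_{d\to\infty}\calI(f_d),
\]
whence $\limsup_{d\to\infty}\calI(f_d)\le\calI(f)$. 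Chaining the two inequalities with the trivial $\liminf\le\limsup$ forces equality throughout and establishes \eqref{efdilimit}.

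The only genuinely nontrivial ingredient is Fatou's lemma itself, which is where the hard work has been hidden. The standard route is via the monotone convergence theorem applied to $h_n:=\inf_{k\ge n}(g+f_k)$, which increases a.e.\ to $\liminf_d(g+f_d)=g+f$; monotone convergence in turn rests on the usual approximation of nonnegative measurables by simple functions. Since the theorem is cited to \cite{Halmos74}, this supporting chain is taken as standard, and the argument above reduces the proof to the two Fatou applications and the bookkeeping of null sets.
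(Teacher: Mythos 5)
Your argument is the standard textbook proof of the dominated convergence theorem via two applications of Fatou's lemma, and it is correct; note that the paper itself offers no proof of this statement, citing it as a classical result from Halmos, so there is nothing to compare against. One small notational point: since the hypothesis is $|f_d|\le|g|$ rather than $|f_d|\le g$, you should apply Fatou to the nonnegative sequences $|g|+f_d$ and $|g|-f_d$ (equivalently, replace $g$ by $|g|$ throughout), after which every step goes through unchanged because $|g|$ is integrable whenever $g$ is.
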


Theorem \ref{thm:DCT} can be directly applied to a variety of
sequences $\{f_d\}_{d\ge 0}$. It implies, in particular, that if the
functions $f_\setu$ in the decomposition \eqref{eq:decomp} are integrable
and
\begin{equation}\label{dec}
    f_d\,:=\,\sum_{\setu\subseteq\{1,\ldots,d\}}f_\setu
\end{equation}
converge dominantly to $f$, then the integral can be computed using the equality
$$   \mathcal I(f)\,=\,\lim_{d\to\infty}\sum_{\setu\subseteq\{1,\ldots,d\}}
       \int_{D^{|\setu|}}f_\setu(\bx_\setu)\,\rd\mu_{|\setu|}(\bx_\setu).
$$
(We will use this fact later in the MDM.)
Similarly, for fixed  $\ba=(a_1,a_2,a_3,\ldots)\in D^\bbN$ we have
\begin{equation}\label{dom-conv}
      \calI(f)\,=\,
      \calI_\ba(f)\,:=\,\lim_{d\to\infty}\int_{D^d}
      f(x_1,\ldots,x_d,a_{d+1},a_{d+2},\ldots)\,\rd\mu_d(x_1,\ldots,x_d)
\end{equation}
if the functions
\begin{equation}\label{ef-d}
   D^\bbN\,\ni\,\bx\,=\,
    (x_1,x_2,x_3,\ldots)\,\mapsto\,
    f(x_1,\ldots,x_d,a_{d+1},a_{d+2},\ldots)
\end{equation}
are integrable and converge dominantly to $f$. 
That is, we then have equivalence of the Lebesgue and
`anchored' integrals for the anchor $\ba$. 

The following result allows us to claim that such equivalence holds for \emph{all}
anchors after checking only one sequence, e.g.,
$f_d(\bsx)=f(x_1,\ldots,x_d,a,a,a,\ldots)$ for an $a\in D$.
The sufficient condition is however stronger than that in Theorem \ref{thm:DCT}.

\begin{theorem}\label{thm:any-anchor}
Let $\{f_d\}_{d\ge 0}$ be a sequence of integrable functions converging
dominantly to $f$, such that each $f_d$ depends only on the variables
$x_1,\ldots,x_d$.  Let, in addition, the convergence be uniform $a.e.$, or
the following less restrictive assumption be satisfied: for 
$\bsx=(x_1,x_2,x_3,\ldots)\,\;a.e.$
\begin{equation}\label{uniform}
   |f_d(\bsx)-f(\bsx)|\,\le\, g_d(x_1,\ldots,x_d)\qquad\forall d\ge 0,
\end{equation}
for some sequence $\{g_d\}_{d\ge 0}$ of integrable functions that converges
dominantly to the zero function. Then \eqref{dom-conv} holds for 
$\ba=(a_1,a_2,a_3,\ldots)\;\,a.e.$ 

Moreover, if \eqref{uniform} holds for all $\bx\in D^\bbN$ then \eqref{dom-conv} 
holds for all anchors $\ba\in D^\bbN$ provided the functions \eqref{ef-d} are 
measurable.
\end{theorem}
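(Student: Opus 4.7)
The plan is to reduce the theorem to two applications of Theorem~\ref{thm:DCT} combined with Fubini's theorem. The starting point is the decomposition
\begin{equation*}
  \int_{D^d} f(x_1,\ldots,x_d,a_{d+1},a_{d+2},\ldots)\,\rd\mu_d
  \,=\, \calI(f_d)\,+\,R_d(\ba),
\end{equation*}
with
\begin{equation*}
  R_d(\ba) \,:=\, \int_{D^d}\bigl[f(x_1,\ldots,x_d,a_{d+1},\ldots)
    -f_d(x_1,\ldots,x_d)\bigr]\,\rd\mu_d(x_1,\ldots,x_d),
\end{equation*}
which exploits the fact that $f_d$ depends only on $x_1,\ldots,x_d$, so that its integral over $D^d$ coincides with $\calI(f_d)$ and is independent of $\ba$. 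Applying Theorem~\ref{thm:DCT} to $\{f_d\}$ gives $\calI(f_d)\to\calI(f)$, so it remains to prove $R_d(\ba)\to 0$.

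To control $R_d(\ba)$ I would invoke \eqref{uniform}: wherever the pointwise bound holds, the integrand of $R_d(\ba)$ is majorised by $g_d(x_1,\ldots,x_d)$, giving
\begin{equation*}
  |R_d(\ba)|\,\le\,\int_{D^d}g_d(x_1,\ldots,x_d)\,\rd\mu_d\,=\,\calI(g_d),
\end{equation*}
and a second application of Theorem~\ref{thm:DCT}, this time to $\{g_d\}$ (which converges dominantly to $0$), yields $\calI(g_d)\to 0$. The uniform-convergence case is subsumed by taking $g_d$ to be the constant $\esup_{\bsx}|f_d(\bsx)-f(\bsx)|$.

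The delicate step is the measure-theoretic bookkeeping, since \eqref{uniform} is a priori only an $a.e.$ statement. Let $N_d\subseteq D^\bbN$ denote the $\mu$-null exceptional set at stage~$d$. Decomposing $\mu$ as the product of the measure on $(x_1,\ldots,x_d)$ and the measure on the tail $(x_{d+1},x_{d+2},\ldots)$, Fubini's theorem implies that the set $M_d$ of tails $(a_{d+1},a_{d+2},\ldots)$ for which the corresponding $(x_1,\ldots,x_d)$-section of $N_d$ has positive $\mu_d$-measure is itself $\mu$-null. Setting $M:=\bigcup_{d\ge 0}M_d$, which remains $\mu$-null, the bound on $|R_d(\ba)|$ is valid for every $\ba\notin M$ and every $d$; this proves the first claim. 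For the ``moreover'' part the pointwise validity of \eqref{uniform} eliminates every $N_d$, so the estimate on $|R_d(\ba)|$ holds for every anchor, and the assumed measurability of the functions \eqref{ef-d} ensures that the left-hand side of the initial decomposition is well-defined for all $\ba\in D^\bbN$, completing the argument.
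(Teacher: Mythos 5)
Your proof is correct and follows essentially the same route as the paper: both arguments rest on applying dominated convergence to $\{g_d\}$ and on a Fubini/product-measure argument showing that the exceptional null set of \eqref{uniform} has null sections for almost every tail $(a_{d+1},a_{d+2},\ldots)$; your splitting into $\calI(f_d)+R_d(\ba)$ is just a repackaging of the paper's triangle-inequality estimate $|f(\bsx)-f(x_1,\ldots,x_d,a_{d+1},\ldots)|\le 2\,g_d(x_1,\ldots,x_d)$. The one point you gloss over in the a.e.\ case is that the sections $\bsx\mapsto f(x_1,\ldots,x_d,a_{d+1},a_{d+2},\ldots)$ must be shown measurable for almost every $\ba$ (the paper's set $\calD_1$), but this follows from the same product-measure argument you already invoke.
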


\begin{proof}
By dominated convergence of $\{f_d\}_{d\ge 0}$ we know that $f$ is integrable.

Let $\calD_1$ be the set of all points $\ba=(a_1,a_2,a_3,\ldots)$ for which
the functions \eqref{ef-d}
are measurable for all $d\ge 0$. We show that $\mu(\calD_1)=1$.  Indeed, since
$f$ is measurable and $\mu$ is a product measure, $\mu=\mu_d\times\mu$, it
follows that for any fixed $d$ the measure of
$(a_{d+1},a_{d+2},\ldots)$ such that the functions \eqref{ef-d} are
measurable is $1$.
Hence $\mu(\calD_1)=1$ as an intersection of countably many sets of measure $1$.

Let $\calD_2$ consist of all $\ba$ for which
\begin{equation}\label{D2set}
      \mu\Big(\big\{\,\bsx:\;
      |f_d(\bsx)-f(x_1,\ldots,x_d,a_{d+1},a_{d+2},\ldots)|\,\le\,g_d(x_1,\ldots,x_d)\;\,
      \forall d\ge 0\,\big\}\Big) \,=\,1.
\end{equation}
Then $\mu(\calD_2)=1$ as well. Indeed, denote by $\calD$ the
collection of all $\bsx$ for which \eqref{uniform} holds. Since
$\mu(\calD)=1$, using again the argument that $\mu$ is a product measure,
we obtain, for 
$\ba\,\;a.e.$ and fixed $d\ge 0$, that
$$  \mu\Big(\big\{\bsx:(x_1,\ldots,x_d,a_{d+1},a_{d+2},\ldots)\in\calD\big\}\Big)=
       \mu_d\Big(\big\{(x_1,\ldots,x_d):(x_1,\ldots,x_d,a_{d+1},a_{d+2},\ldots)\in\calD\big\}\Big)
       =1.
$$
Since $\calD_2$ is the countable intersection of these sets, it
follows that $\mu(\calD_2)=1$.
Concluding this part of the proof we have that
$\mu(\calD_1\cap\calD_2)=1$.

Now let $\ba\in\calD_1\cap\calD_2$. Then for 
$\bsx\,\;a.e.$
\begin{eqnarray*}
      |f(\bsx)-f(x_1,\ldots,x_d,a_{d+1},a_{d+2},\ldots)| &\le &|f_d(\bsx)-f(\bsx)|+
       |f_d(\bsx)-f(x_1,\ldots,x_d,a_{d+1},a_{d+2},\ldots)| \\
        &\le & 2\,g_d(x_1,\ldots,x_d),
\end{eqnarray*}
and by dominated convergence of $\{g_d\}_{d\ge 0}$ to the zero
function, it follows that
$$ \lim_{d\to\infty}\int_{D^\bbN}
      |f(\bsx)-f(x_1,\ldots,x_d,a_{d+1},a_{d+2},\ldots)|\,\rd\mu(\bsx)\,\le\,2
      \lim_{d\to\infty}\int_{D^d}g_d(x_1,\ldots,x_d)\,\rd\mu_d(x_1,\ldots,x_d)=0,
$$
which implies \eqref{dom-conv} for 
$\ba\,\;a.e.$

To show the remaining part of the theorem, observe that the actual set of anchors
$\ba$ for which we have equvalence of the Lebesgue and `anchored' integrals
includes $\mathcal D_1\cap\mathcal D_2$. Under the additional assumptions we
obviously have $\mathcal D_1=D^\bbN$ and $\mathcal D_2=D^\bbN$.
Hence $\mathcal D_1\cap\mathcal D_2=D^\bbN$, as claimed.
\end{proof}

\begin{example} \label{eq:calD}
For $D=[-\alpha,\alpha]$ or $D=\bbR$, consider
\begin{equation}\label{expl1}
    f(x_1,x_2,\ldots) := \sum_{j=1}^\infty\lambda_j\,x_j^2,
\end{equation}
where $\lambda_j>0$ $\forall j$ and $\sum_{j=1}^\infty \lambda_j<\infty$,
and assume that $E:=\int_D x^2\,\rd\mu_1< \infty.$ The function $f$ is well defined
since even in case $D=\bbR$ (where $\mu_1$ could be Gaussian) the set
of sequences $(x_1,x_2,\ldots)\in\bbR^\bbN$ for which the sum \eqref{expl1} is 
finite is of measure one. The Lebesgue integral equals
$$  \calI(f)\,=\,\sum_{j=1}^\infty \int_D\lambda_j x_j^2\,\rd\mu_1\,=\,
     E\sum_{j=1}^\infty\lambda_j.
$$
On the other hand, for the `anchored' integral with $\ba=(a_1,a_2,a_3,\ldots)$ 
we have
$$ \int_{D^d}f(x_1,\ldots,x_d,a_{d+1},a_{d+2},\ldots)
   \,\rd\mu_d(x_1,\ldots,x_d) \,=\,
   E\sum_{j=1}^d\lambda_j\,+\, \sum_{j=d+1}^\infty\lambda_j a_j^2,
$$
which converges to $\calI(f)$ if and only if 
\begin{equation}\label{eq:anccond}
   \sum_{j=1}^\infty\lambda_ja_j^2<\infty.
\end{equation}
The inequality \eqref{eq:anccond} is a necessary and sufficient condition for 
$\calI(f)=\calI_\ba(f)$. It holds for all $\ba$ if $D=[-\alpha,\alpha]$,
and for $\ba\,\;a.e.$ if $D=\bbR$.
We also observe that assumptions of
Theorem \ref{thm:any-anchor} are satisfied with, e.g.,
$$ f_d(\bsx)\,=\,f(x_1,\ldots,x_d,0,0,0,\ldots)
      \,=\,\sum_{j=1}^d\lambda_jx_j^2\qquad\mbox{and}\qquad
      g_d\,\equiv\,\alpha^2\sum_{j={d+1}}^\infty\lambda_j
$$
only if $D$ is a finite interval, and then the convergence is uniform 
on the whole domain $D^\bbN$.
\end{example}

\section{The setting for MDM}\label{sec:setting}

We provide in this section basic assumptions concerning the approximate
integration problem considered in this paper. In particular, we will
introduce standing assumptions \ref{A1}--\ref{A6} that pertain
to the whole paper. As we shall see later, the assumptions are satisfied
by a number of specific problems.

\subsection{The function class $\calF$}

We introduce a class $\calF$ of $\infty$-variate real-valued functions
whose integrals are to be approximated. For a finite subset
$\setu\subset\bbN$ and a point $\bsx\in D^\bbN$,
$\bsx_\setu$ denotes the variables
$x_j$ with $j\in \setu$, and $D^\setu$ denotes the product integration
region $D^{|\setu|} $ with the variables replaced by those in
$\bsx_\setu$, where $|\setu|$ denotes the cardinality of $\setu$.

As in the Introduction, functions in the class $\calF$ are expressed as
sums of functions $f_\setu$, with $f_\setu$ depending only on the
variables in $\bsx_\setu$ and belonging to a normed linear space
$F_\setu$.
\begin{enumerate}
[start=1,label=\textup{(\textbf{A\arabic*})},leftmargin=2.5em,series=assumptions]
\item\label{A1} %
Each $f\in\calF$ has a decomposition of the form \eqref{eq:decomp}
where the sums over all finite subsets $\setu\subset\bbN$ are defined
as in \eqref{eq:sumlim}, and each $f_\setu$ is formally a function on
$D^\mathbb{N}$ but depends only on the variables $x_j$ with
$j\in\setu$. The functions $f_d$ defined in \eqref{dec} are
assumed to be dominantly (or even uniformly) convergent to $f$.
\end{enumerate}
\begin{enumerate}[resume*=assumptions]
\item\label{A2} Each component $f_\setu$ of $f$ in
    \eqref{eq:decomp} belongs to a normed space $F_\setu$ of
    real-valued measurable functions defined on $D^\setu$ with norm
    $\|f_\setu\|_{F_\setu}$. Moreover, $\|f_\setu\|_{F_\setu} \le
    B_\setu$, see \eqref{eq:Bu}, for known positive numbers~$B_\setu$.
    In particular, $F_\emptyset$ is the space of constant functions
    with norm given by the absolute value. Finally, point evaluation
    at any $\bsx_\setu\in D^\setu$ is assumed to be a continuous
    linear functional on $F_\setu$.
\end{enumerate}

We shall see later in Sections~\ref{sec:applic1} and~\ref{sec:SecondAppl}
concrete examples of the function class $\calF$.

\subsection{The integration problem}

As in \eqref{eq:int}, we express an infinite-dimensional integral
$\calI(f)$ for $f\in\calF$ as a sum of multivariate integrals
$I_\setu(f_\setu)$ from the decomposition $f=\sum_{|\setu|<\infty}
f_\setu$. Recalling that $\rho$ is a given probability density function
on $D$, we write $\rho_\setu(\bsx_\setu) := \prod_{j\in\setu}\rho(x_j)$.
For $\setu=\emptyset$, we set $I_\emptyset(f_\emptyset)\,:=\,f_\emptyset$.
We make the following assumption.
\begin{enumerate}[resume*=assumptions]
\item\label{A3}
All functions in $F_\setu$ are Lebesgue measurable and integrable with
respect to $\rho_{\setu}(\bx_\setu)\,\rd\bx_\setu$, and the
functionals $I_\setu$ are continuous, i.e.,
\begin{equation} \label{eq:Cu}
  C_{\setu}\,:=\,\|I_\setu\|\,=\,
  \sup_{\|f_\setu\|_{F_\setu}\le 1} \left|\int_{D^{|\setu|}} f_\setu(\bsx_\setu)\,
    \rho_\setu(\bsx_\setu)\,\rd\bsx_\setu \right| \,<\, \infty.
\end{equation}
\end{enumerate}
At this moment, we also assume that the numbers $B_\setu$ in~\eqref{eq:Bu}
and $C_{\setu}$ in~\eqref{eq:Cu} satisfy
\begin{equation}\label{ass:pre5}
   \sum_{|\setu|<\infty} C_{\setu}\,B_{\setu}<\infty,
\end{equation}
which will later be replaced by a stronger assumption~\ref{A4}. Since
$|I_\setu(f_\setu)| \le \|I_\setu\|\,\|f_\setu\|_{F_\setu} \le C_{\setu}\,
B_\setu$, the condition~\eqref{ass:pre5} implies that the sum
$\sum_{|\setu|<\infty} |I_\setu(f_\setu)|$ is finite. Moreover,
\[
  \sup_{f\in\calF}|\calI(f)| \,\le\,
  \sum_{|\setu|<\infty} C_{\setu}\,B_{\setu} \,<\, \infty.
\]

We end this subsection with the following remark.

\begin{remark}\label{rem:lx} Instead of assuming
convergence in \ref{A1}, we can impose
conditions on the point evaluation functionals as follows. Let
$L_{\setu,\bsx_\setu}$ be the point evaluation functional on $F_\setu$,
$L_{\setu,\bsx_\setu}(f_\setu)=f_\setu(\bsx_\setu)$. Assume that for every
$\setu$, we have
$\calL_\setu:=\sup_{\bsx_\setu}\|L_{\setu,\bsx_\setu}\|<\infty$ and
\begin{equation}\label{ass:lx2}
\sum_{|\setu|<\infty} \calL_\setu\, B_\setu <\infty.
\end{equation}
Then $|\sum_{|\setu|<\infty}f_\setu(\bsx_\setu)|\le \sum_{|\setu|<\infty}
|L_{\setu,\bsx_\setu}(f_\setu)| \le \sum_{|\setu|<\infty} \calL_\setu
\|f_\setu\|_{F_\setu} \le \sum_{|\setu|<\infty} \calL_\setu B_\setu<\infty$;
that is, we have uniform convergence on the whole domain $D^\bbN$.
Hence, defining the class
$$   \calF^*\,:=\,\bigg\{\sum_{|\setu|<\infty}f_\setu\,:
         \,\|f_\setu\|_{F_\setu}\le B_\setu\bigg\},
$$
we have by Theorem~\ref{thm:any-anchor} that for any $f$ in $\calF^*$
the Legesgue integral equals `anchored' integral for any anchor, and
the integral can be expressed by any decomposition.

For spaces $F_\setu$ being the $|\setu|$-fold tensor products of
some space $F_{\{1\}}$ of univariate functions, we often have
that $C_\setu=C_{\{1\}}^{|\setu|}$ and
$\calL_\setu=\calL_{\{1\}}^{|\setu|}$. Then for many families of bounds
$B_\setu$, including bounds of the form \eqref{eq:POD} to be discussed
later, it is known that \textnormal{(}see Lemma~\ref{lem:u}
below\textnormal{)} \eqref{ass:pre5} is equivalent to \eqref{ass:lx2}
which in turn is equivalent to
\[\sum_{|\setu|<\infty}B_\setu<\infty.
\]
\end{remark}

\subsection{Examples of decompositions}\label{ssec:example}

In practice one can expect to be given the $\infty$-variate function $f$,
the integration domain $D^\bbN$ and the weight function $\rho$, after which
it is the user's task to define a suitable sequence of normed spaces $F_\setu$
and a method of decomposing $f$ into components $f_\setu \in F_\setu$.

\begin{example}\label{exa:one}
The following $\infty$-variate problem
is a variant of a simpler model problem introduced in
\cite[Section~1.5]{KSS11}, which in turn is modeled on a study
\cite{KSS12} of a diffusion problem for the flow of a liquid through a
porous medium treated as a random permeability field. In this example we
have
\begin{equation} \label{eq:example}
  D \,:=\, \big[-\tfrac{1}{2},\tfrac{1}{2}\big],
  \quad\rho(x) := 1,\quad
  f(\bsx)\, :=\, \frac{1}{1+\sum_{j=1}^\infty x_j/j^2}.
\end{equation}

For the space $F_\setu$ we here choose for simplicity a Hilbert space.
Given the requirement that point evaluation be a continuous linear
functional, we choose the simplest Hilbert space available to us, namely
\[
 F_\setu \,:=\, H^{1,...,1} \,:=\, \bigotimes_{j\in \setu} H^1,
\]
with the conventional Sobolev-type norm
\[
  \|g\|_{F_\setu} :=
  \Bigg(\sum_{\bsalpha\le(1,\ldots,1)}\|\calD^\bsalpha g\|^2_{L_2(D^\setu)}\Bigg)^{1/2},
\]
where the sum is over all multi-indices $\bsalpha$ with components
$\alpha_j\in\{0,1\}$ for $j\in\setu$, and $\calD^\bsalpha$ denotes the
appropriate weak mixed derivative. 

This choice of the normed spaces $F_\setu$ allows many different
decompositions. To illustrate this point, let us first confine our
attention to so-called ``anchored'' decompositions (see e.g.,
\cite{KSWW10b} and later), with anchor at some fixed $a \in D$. That
is, for $f\in \calF$ the terms $f_\setu\in F_\setu$ in the decomposition
\eqref{eq:decomp} are defined by the property that
\begin{equation} \label{eq:kill}
 f_\setu(\bsx_\setu) = 0 \quad \mbox{if}\quad x_j=a \quad\mbox{and}\quad j\in\setu.
\end{equation}
For each fixed value of $a\in D$, the decomposition \eqref{eq:decomp}
satisfying this property is uniquely determined. (For a given finite
subset $\setv\subset\bbN$, set $x_j=a$ for all $j\notin \setv$ in
\eqref{eq:decomp}. The only surviving terms on the right-hand side are
those $f_\setu$ for which $\setu\subseteq\setv$. Working from the smallest
subsets upwards, one proves inductively that each $f_\setu$ is uniquely
determined.) For this anchored decomposition we have
\[
 f_d(x_1,\ldots,x_d)
 \,:=\, \sum_{\setu\subseteq\{1,\ldots,d\}}f_\setu(\bsx_\setu)
 \,=\, f(x_1,\ldots,x_d,a,a,\ldots),
\]
which follows on setting $x_j = a$ for $j>d$ in the decomposition
\eqref{eq:decomp} and using the property \eqref{eq:kill}.

Since there are an infinite number of choices for~$a$, there are
correspondingly an infinite number of decompositions, which are easily
seen to be different. Under the conditions of Theorem~\ref{thm:any-anchor}
we know that each anchored decomposition will give the same value for the
exact integral $\calI(f)$, no matter the choice of the anchor $a$.
However, the MDM developed below will in general give different
approximate results for different anchors.

This choice of spaces $F_\setu$ also allows the so-called ``ANOVA''
decomposition (see e.g., \cite{KSWW10b}) and many other possibilities. For
example, a decomposition could be determined as the result of some numerical
computation.
\end{example}

\begin{example}\label{exa:two}
With the same definition of $F_\setu$ as in Example~\ref{exa:one}, we can
express the zero function by the zero decomposition, where the uniform
convergence holds true, and we obviously have $\calI(0)=0$. We now present
an example of a decomposition of the zero function which at face value
gives a non-zero value for the integral.

For $k\ge 1$, let $\psi_k(t)=2^{k+1}\big(1-\big|2^{k+1}t-1\big|\big)_+$
with $t\in D=[-\frac 12,\frac 12]$, i.e., $\psi_k(t)$ is the `hat'
function supported on $\big[0,2^{-k}\big]$ and satisfying
$\int_D\psi_k(t)\,\mathrm dt=1$. For $\bx=(x_1,x_2,x_3,\ldots)\in D^\bbN$,
we choose
\[
  f_{\{1\}}(\bx)=\psi_1(x_1), \quad\mbox{and}\quad
  f_{\{1,\ldots,k\}}(\bx)=\psi_k(x_1)-\psi_{k-1}(x_1) \quad\mbox{for}\quad k \ge 2,
\]
and set $f_\setu = 0$ for all other finite subsets $\setu$. We obviously
have that $f_{\{1,\ldots,k\}}\in F_{\{1,\ldots,k\}}$ and the sum
\[
 f_d(x_1,\ldots,x_d)
 \,:=\, \sum_{\setu\subseteq\{1,\ldots,d\}} f_\setu(\bsx)
 \,=\, \sum_{k=1}^d f_{\{1,\ldots,k\}}(\bx)
 \,=\, \psi_d(x_1)
\]
is pointwise, but not dominantly, convergent to zero for all $\bx\in
D^\bbN$. However,
\[
  \sum_{\setu\subseteq\{1,\ldots,d\}} I_\setu(f_\setu)
  \,=\, \sum_{k=1}^d I_{\{1,\ldots,k\}}\big(f_{\{1,\ldots,k\}}\big)=1
      \qquad\forall d\ge 1.
\]
This example shows that the dominated convergence of
the decomposition in \eqref{dec} is crucial.
\end{example}

As a final comment, we remark that it is often more convenient in practice
to choose the spaces $F_\setu$ to be reproducing kernel spaces based on a
simple univariate kernel, because the norms of a given function are
typically smaller. In this case the decomposition in \eqref{eq:decomp} is
uniquely determined.

\subsection{A strengthened assumption on $C_\setu$ and $B_\setu$}

For our formal setting we make a stronger assumption than
\eqref{ass:pre5}, namely we assume a certain decay
\begin{enumerate}[resume*=assumptions]
\item\label{A4} \qquad\quad %
$ \displaystyle \alpha_0 \,:=\, {\rm
  decay}(\{C_{\setu}\,B_\setu\}_\setu) \,:=\, \sup\bigg\{\alpha \;:\;
  \sum_{|\setu|<\infty} (C_{\setu}\,B_{\setu})^{1/\alpha}
   < \infty\bigg\}  \,>\, 1. $
\end{enumerate}
The purpose of this strengthened assumption will become clear in
Subsection~\ref{sec:def-calU}, when we construct the active set.

\subsection{Allowed algorithms}

In general, the components $f_\setu$ in the decomposition
$f=\sum_{|\setu|<\infty} f_{\setu}$ are not known explicitly.
Nevertheless, it is assumed that we can sample $f_\setu$ at arbitrary
points $\bsx_\setu$ in the domain. Explicitly, we make the following
assumption.
\begin{enumerate}[resume*=assumptions]
\item\label{A5} %
For a finite set $\setu\subset \bbN$ we can evaluate
$f_\setu(\bsx_\setu)$ for $\bsx_\setu\in D^{|\setu|}$ at cost
$\pounds(|\setu|)$, where $\pounds$ is a given non-decreasing
function.
\end{enumerate}
At this point we make no assumption about our ability to evaluate
$f(\bx)$, but we shall return to this question in
Section~\ref{sec:applic1}.

\medskip
We assume that for each $\setu$ we have at our disposal a sequence
$\left\{A_{\setu,n}\right\}_{n\in\bbN \cup \{0\}}$ of quadrature rules
approximating $I_\setu(f_\setu)$ as in \eqref{eq:alg}, with $A_{\setu,0}=0$,
and, moreover, the following condition is satisfied:
\begin{enumerate}[resume*=assumptions]
\item\label{A6} There exists $q>0$ with the following property:
    for each $\setu$, there exist $G_{\setu,q}>0$ such that the
    \emph{worst case error} of $A_{\setu,n}$ in the unit ball of
    $F_\setu$ satifies
\begin{equation} \label{eq:Gu}
 \|I_\setu-A_{\setu,n}\|
 \,=\,  \sup_{\|f_\setu\|_{F_\setu}\le1} |I_\setu(f_\setu)-A_{\setu,n}(f_\setu)|
 \,\le\, \frac{G_{\setu,q}}{(n+1)^q}  \qquad\mbox{for}\quad
 n\,=\,0,1,2,\dots.
\end{equation}
\end{enumerate}
Note that $q$ is not uniquely defined. Since~\eqref{eq:Gu} holds even for
$n=0$, we can assume $C_{\setu}\le G_{\setu,q}$ where $C_\setu$ is as in
\eqref{eq:Cu}. We also observe that \eqref{eq:Gu} implies
$\lim_{n\to\infty}\|I_\setu-A_{\setu,n}\| = 0$.

\section{Multivariate Decomposition Method} \label{sec:mdm}

We are now ready to introduce the \emph{Multivariate Decomposition Method}
(MDM) in our setting.

\subsection{MDM}

As in \cite{Was13a,Was14a,PW14}, the first step of the method is to
construct, for given $\e>0$, what we call here the \emph{active set}
$\calU(\e)$ -- a finite collection of those subsets
$\setu\subset\mathbb{N}$ that are most important for the integration
problem. Specifically, under our standing assumptions
\ref{A1}--\ref{A6}, we choose a set $\calU(\e)$ such that
\begin{equation} \label{eq:term1}
  \sum_{\setu\notin\setU(\e)}|I_\setu(f_\setu)|\,\le\, \frac{\e}{2}.
\end{equation}
The MDM $\calA_\e(f)$ for the integral $\calI(f)$ is then given by
\eqref{eq:alg}, with the values of $n_\setu$ chosen such that
\begin{equation}\label{eq:term2}
  \sum_{\setu\in\setU(\e)}|I_\setu(f_\setu)-A_{\setu,n_\setu}(f_\setu)|
   \,\le\, \frac{\e}{2}.
\end{equation}
It then follows that for all $f\in\calF$ the integration error of MDM
satisfies
\[
  |\calI(f)-\calA_\e(f)|
  \,\le\, \sum_{\setu\notin\setU(\e)}|I_\setu(f_\setu)|
  + \sum_{\setu\in\setU(\e)}|I_\setu(f_\setu)-A_{\setu,n_\setu}(f_\setu)|
  \,\le\, \frac{\e}{2} + \frac{\e}{2} \,=\, \e.
\]
Consequently, the \emph{worst case error} of $\calA_\e$ in $\calF$
satisfies
\[
  e(\calA_\e;\calF) \,:=\, \sup_{f\in\calF} |\calI(f)-\calA_\e(f)| \,\le\, \e.
\]
The \emph{information cost} is
\begin{equation} \label{eq:cost}
  {\rm cost}(\calA_\e)\,:=\,\sum_{\setu\in\setU(\e)}
  n_\setu\,\pounds(|\setu|).
\end{equation}

We remark that such an active set $\calU(\e)$ is not unique. Note the two
distinct special cases $\calU(\e)=\emptyset$ (which corresponds to
$\calA_\e=0$) and $\calU(\e)=\{\emptyset\}$ (which corresponds to
$\calA_\e(f)=f_\emptyset$).

\subsection{Constructing $\calU(\e)$}\label{sec:def-calU}

We use essentially the same approach for constructing the active set
$\calU(\e)$ as in \cite{Was13a}. Recall that for each $\setu$ we have
$|I_\setu(f_\setu)|\le C_{\setu} B_\setu$, and that the sequence
$\{C_\setu B_\setu\}$ satisfies \ref{A4}.  It follows that for any
$\alpha \,\in\, (1,\alpha_0)$\, we may define
\begin{equation} \label{eq:def_U}
  \calU(\varepsilon)\,=\,\calU(\e,\alpha)
  \,:=\, \bigg\{ \setu \;:\; (C_{\setu}\,B_{\setu})^{1-1/\alpha} \, > \,
  \frac{\e/2}{\sum_{|\setv|<\infty} (C_{\setv}\,B_{\setv})^{1/\alpha}}
  \bigg\},
\end{equation}
and this would yield \eqref{eq:term1}. Moreover, following the proof of
\cite[Theorem~2]{Was13a} we can obtain an upper bound on the size of the
resulting active set, as given in the proposition below.

\begin{proposition} \label{prp:1}
Let $\calU(\e,\alpha)$ be given by~\eqref{eq:def_U}.  Then for any
$\varepsilon>0$ and $\alpha\in(1,\alpha_0)$ we have
\[
   \left|\setU(\e,\alpha)\right|\,<\,\left(\frac2\e\right)^{\tfrac{1}{\alpha-1}}
     \Bigg(\sum_{|\setu|<\infty}(C_{\setu}\,B_{\setu})^{\tfrac{1}{\alpha}}
       \Bigg)^{\tfrac{\alpha}{\alpha-1}}.
\]
\end{proposition}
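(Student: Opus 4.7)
The plan is a direct counting-by-summing argument, using the defining inequality of $\calU(\e,\alpha)$ as a lower bound on the contribution of each of its members to the (finite) series $S := \sum_{|\setv|<\infty}(C_\setv B_\setv)^{1/\alpha}$. Note that $S<\infty$ because $\alpha<\alpha_0$ by assumption \ref{A4}, so the bound below is meaningful.

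First I would rewrite the defining condition of the active set in a more convenient form. For $\setu\in\calU(\e,\alpha)$ one has $(C_\setu B_\setu)^{1-1/\alpha}>\e/(2S)$. Setting $t_\setu:=(C_\setu B_\setu)^{1/\alpha}$, this reads $t_\setu^{\alpha-1}>\e/(2S)$, so raising to the power $1/(\alpha-1)$ gives
\[
 (C_\setu B_\setu)^{1/\alpha} \,=\, t_\setu \,>\, \left(\frac{\e}{2S}\right)^{1/(\alpha-1)} \qquad\text{for every }\setu\in\calU(\e,\alpha).
\]

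Next, since $\calU(\e,\alpha)$ is a subset of the (at most countable) index set over which $S$ is taken, summing the above strict inequality over $\setu\in\calU(\e,\alpha)$ and using nonnegativity of the remaining terms yields
\[
 S \,\ge\, \sum_{\setu\in\calU(\e,\alpha)} (C_\setu B_\setu)^{1/\alpha} \,>\, |\calU(\e,\alpha)|\cdot\left(\frac{\e}{2S}\right)^{1/(\alpha-1)}.
\]
(Strictness is preserved as long as $\calU(\e,\alpha)$ is nonempty; if it is empty the stated bound is trivial.) Solving for $|\calU(\e,\alpha)|$ and simplifying the exponent via $1+1/(\alpha-1)=\alpha/(\alpha-1)$ gives exactly
\[
 |\calU(\e,\alpha)| \,<\, S\cdot\left(\frac{2S}{\e}\right)^{1/(\alpha-1)} \,=\, \left(\frac{2}{\e}\right)^{1/(\alpha-1)} S^{\alpha/(\alpha-1)},
\]
which is the claim. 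There is no serious obstacle here; the only delicate point is bookkeeping the exponents $1/\alpha$, $(\alpha-1)/\alpha$, $1/(\alpha-1)$ and $\alpha/(\alpha-1)$ correctly, and noting that the finiteness of $S$ (hence the well-posedness of $\calU(\e,\alpha)$ and of the whole estimate) is precisely what \ref{A4} with $\alpha\in(1,\alpha_0)$ provides.
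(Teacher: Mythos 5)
Your proof is correct and is exactly the Markov/Chebyshev-type counting argument that the paper invokes by reference to \cite[Theorem~2]{Was13a}: each member of the active set contributes more than $(\e/(2S))^{1/(\alpha-1)}$ to the convergent sum $S=\sum_{|\setv|<\infty}(C_\setv B_\setv)^{1/\alpha}$, whose finiteness is guaranteed by \ref{A4} with $\alpha<\alpha_0$. The exponent bookkeeping and the remarks on the empty-set case and on strictness are all in order, so this supplies precisely the proof the paper omits.
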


\subsection{Constructing $A_{\setu,n_\setu}(f_\setu)$}\label{sec:algs}

The main difficulty in the construction of the algorithms
$A_{\setu,n_\setu}$ for $\setu\in\setU(\e,\alpha)$ is the selection of the
numbers $n_\setu$. A natural approach is to minimize the information
cost~\eqref{eq:cost} subject to the desired error bound~\eqref{eq:term2}
being attained. This depends on the rate of convergence of the worst case
errors $\|I_\setu-A_{\setu,n}\|$ for fixed $\setu$ and $n\to\infty$. For a
given selection of the $n_\setu$ this rate is determined by \eqref{eq:Gu},
from which it follows that for any $f\in\calF$ we have
\begin{equation} \label{eq:need1}
 \sum_{\setu\in \calU(\varepsilon,\alpha)}
     |I_\setu(f)-A_{\setu,n_{\setu}}(f)| \,\le\,
     \sum_{\setu\in \calU(\varepsilon,\alpha)}
   \frac{G_{\setu,q}\,B_{\setu}}{(n_\setu+1)^q}.
\end{equation}

Observe that if we take
\[
  n_\setu\,=\,n_\setu(\e,q)\,=\,\left\lfloor h_\setu\right\rfloor,
\]
where the positive real numbers $h_\setu$ minimize
$\sum_{\setu\in\calU(\e,\alpha)}h_\setu\,\pounds(|\setu|)$ subject to
$\sum_{\setu\in \calU(\e,\alpha)} G_{\setu,q}\,B_{\setu}/h_\setu^q =
\e/2$, then both the error and the information cost are controlled, since
\[
  \sum_{\setu\in\setU(\e,\alpha)}\frac{G_{\setu,q}\,B_{\setu}}{(n_\setu+1)^q}
  \,\le\,\sum_{\setu\in\setU(\e,\alpha)}\frac{G_{\setu,q}\,B_{\setu}}{h_\setu^q}
  \,=\,  \frac\e2\quad\mbox{and}\quad
   \sum_{\setu\in\setU(\e,\alpha)} n_\setu\,\pounds(|\setu|)\,\le\,
   \sum_{\setu\in\setU(\e,\alpha)} h_\setu\,\pounds(|\setu|).
\]
An explicit formula for such $h_\setu$ can be obtained using a Lagrange
multiplier argument, giving
\begin{equation}\label{def:nu}
  h_\setu \,=\,
  \bigg(\frac{2}{\e}\sum_{\setv\in \calU(\e,\alpha)}
  \pounds(|\setv|)^{q/(q+1)}\,(G_{\setv,q}\,B_{\setv})^{1/(q+1)}
     \bigg)^{1/q}
  \left(\frac{G_{\setu,q}\,B_{\setu}}
      {\pounds(|\setu|)}\right)^{1/(q+1)}.
 \end{equation}
This analysis leads to the following theorem.

\begin{theorem}\label{thm:1}
Under the standing assumptions \ref{A1}--\ref{A6},
for any $\varepsilon>0$ and $\alpha\in (1,\alpha_0)$ the algorithm
$\calA_\eps$ with $\setU(\e)=\setU(\e,\alpha)$ defined by~\eqref{eq:def_U}
and $n_\setu=\lfloor h_\setu \rfloor$ with $h_\setu$ defined
by~\eqref{def:nu} produces an approximation to the integral $\calI$ with
worst case error $e(\calA_\e;\calF) \le \e$ and
\begin{align}
  \nonumber
  {\rm cost}(\calA_\e)
  \,\le \sum_{\setu\in\calU(\e,\alpha)} h_\setu\, \pounds(|\setu|)
  &\,\le\, \left(\frac{2}{\e}\right)^{1/q}
  \bigg(\sum_{\setu\in\calU(\e,\alpha)} \pounds(|\setu|)^{q/(q+1)}\,
  (G_{\setu,q}\,B_{\setu})^{1/(q+1)} \bigg)^{1+1/q}
  \\
  \label{eq:need2}
  &
  \,\le\, \left(\frac{2}{\e}\right)^{1/q}
  \; \bigg(\sum_{\setu\in\setU(\e,\alpha)}
  (G_{\setu,q}\,B_{\setu})^{1/(q+1)}\bigg)^{1+1/q}\;
      \max_{\setu\in\setU(\e,\alpha)}\pounds(|\setu|).
\end{align}
\end{theorem}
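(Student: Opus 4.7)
The plan is to verify the two promised bounds separately, first the worst-case error and then the information cost, treating each as a nearly mechanical consequence of the definitions \eqref{eq:def_U} and~\eqref{def:nu}. For the worst-case error I would split $|\calI(f)-\calA_\e(f)|$ into the tail sum over $\setu\notin\calU(\e,\alpha)$ and the active-set sum over $\setu\in\calU(\e,\alpha)$, and show each is at most $\e/2$.

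For the tail, I use $|I_\setu(f_\setu)|\le C_\setu\|f_\setu\|_{F_\setu}\le C_\setu B_\setu$ from \ref{A2} and~\ref{A3}. If $\setu\notin\calU(\e,\alpha)$, the defining inequality in~\eqref{eq:def_U} (reversed) gives $C_\setu B_\setu\le \tfrac{\e/2}{S}\,(C_\setu B_\setu)^{1/\alpha}$ where $S:=\sum_{|\setv|<\infty}(C_\setv B_\setv)^{1/\alpha}$, and summing over $\setu\notin\calU(\e,\alpha)$ yields~\eqref{eq:term1}. For the active set, I apply~\eqref{eq:Gu} with $\|f_\setu\|_{F_\setu}\le B_\setu$ to get the bound~\eqref{eq:need1}, then use $n_\setu+1>h_\setu$ (since $n_\setu=\lfloor h_\setu\rfloor$) to replace $(n_\setu+1)^q$ by $h_\setu^q$. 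Substituting the explicit formula~\eqref{def:nu} into $G_{\setu,q}B_\setu/h_\setu^q$ collapses the product: each term becomes $\tfrac{\e/2}{T}\,\pounds(|\setu|)^{q/(q+1)}(G_{\setu,q}B_\setu)^{1/(q+1)}$ where $T:=\sum_{\setv\in\calU(\e,\alpha)}\pounds(|\setv|)^{q/(q+1)}(G_{\setv,q}B_\setv)^{1/(q+1)}$, and summing over $\setu\in\calU(\e,\alpha)$ gives exactly $\e/2$. Adding the two halves gives $e(\calA_\e;\calF)\le\e$.

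For the cost, $n_\setu\le h_\setu$ gives ${\rm cost}(\calA_\e)\le\sum_{\setu\in\calU(\e,\alpha)}h_\setu\,\pounds(|\setu|)$. Multiplying~\eqref{def:nu} through by $\pounds(|\setu|)$ shows $h_\setu\,\pounds(|\setu|)=(2/\e)^{1/q}T^{1/q}\cdot\pounds(|\setu|)^{q/(q+1)}(G_{\setu,q}B_\setu)^{1/(q+1)}$, so summing yields $(2/\e)^{1/q}T^{1+1/q}$, which is the middle expression in~\eqref{eq:need2}. For the last inequality I pull $\pounds(|\setu|)^{q/(q+1)}\le \max_{\setu\in\calU(\e,\alpha)}\pounds(|\setu|)^{q/(q+1)}$ out of the sum; raising the factored bound to the power $1+1/q$ converts this maximum to $\max\pounds(|\setu|)^{(q/(q+1))(1+1/q)}=\max\pounds(|\setu|)$, which gives the final form in~\eqref{eq:need2}. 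The only non-routine step is keeping the algebra of exponents $1/(q+1)$, $q/(q+1)$, and $1+1/q$ straight during the substitution of~\eqref{def:nu}; once that bookkeeping is done, the theorem follows without invoking anything beyond the standing assumptions and the two constructions already in place.
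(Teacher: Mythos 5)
Your proposal is correct and follows essentially the same route as the paper: the tail bound comes from reversing the defining inequality of $\calU(\e,\alpha)$ and summing, the active-set error uses $n_\setu+1>h_\setu$ together with the fact that \eqref{def:nu} makes $\sum_{\setu\in\calU(\e,\alpha)}G_{\setu,q}B_\setu/h_\setu^q=\e/2$, and the cost bound is the same exponent bookkeeping with $T^{1+1/q}$ and $(q/(q+1))(1+1/q)=1$. The only cosmetic difference is that you verify the error constraint by direct substitution of \eqref{def:nu} rather than citing the Lagrange-multiplier construction, which changes nothing of substance.
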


If we wish, we could also assume, analogously to Assumption \ref{A4}, that
\[
  \alpha_q \,:=\, {\rm
    decay}(\{G_{\setu,q}\,B_\setu\}_\setu) \,:=\, \sup\bigg\{\tau \ :\
    \sum_{|\setu|<\infty} (G_{\setu,q}\,B_{\setu})^{1/\tau} <
     \infty\bigg\} \,>\, 1.
\]
In that case, if it also happens that $q<\alpha_q-1$,
then the sum in~\eqref{eq:need2} would be uniformly bounded for all $\e>0$,
\[
  \sum_{\setu\in\calU(\e,\alpha)} (G_{\setu,q}\,B_{\setu})^{1/(q+1)}
  \,\le\, \sum_{|\setu|<\infty} (G_{\setu,q}\,B_{\setu})^{1/(q+1)}\,<\,\infty.
\]
Then, up to a constant, the cost in Theorem \ref{thm:1} would be upper
bounded by $\varepsilon^{-1/q}\max_{\setu\in\setU(\e,\alpha)}\pounds(|\setu|).$

\smallskip
Theorem~\ref{thm:1} differs from \cite[Theorem~9]{Was13a} in the following
way: the assumed bound on the errors for the corresponding algorithms
$A_{\setu,n_\setu}$ in \cite[Formula~(18)]{Was13a} involve also some
logarithmic factors in $n_\setu$ which results in an overall error that is
bounded by $\e$ times a small factor depending on $\e$. Later in
Subsection~\ref{ssec:app1smol} we will also encounter such a scenario, and
we will relax the requirement~\eqref{eq:term2} a little by neglecting
those multiplicative factors when choosing $n_\setu$. In this way we
obtain an algorithm $\overline{\calA}_\eps$ whose error is slightly larger
than $\varepsilon$.

\begin{remark} For a practical implementation we note that the
construction of the active set $\calU(\eps)=\calU(\eps,\alpha)$
in~\eqref{eq:def_U} depends on the error request $\e$ and the choice of
the decomposition through the spaces $F_\setu$ and hence on values of
$C_\setu$ and $B_\setu$, and additionally on the choice of the
parameter~$\alpha$. Moreover, the infinite sum from the denominator
in~\eqref{eq:def_U} needs to be estimated from above. The values of $q$
and $G_{\setu,q}$ in~\eqref{eq:Gu} depend on the available algorithms
$A_{\setu,n_\setu}$ and enter the formula for $n_\setu$
via~\eqref{def:nu}.
\end{remark}

\section{First application: anchored RKHS}\label{sec:applic1}

In both this and the next sections, the space $F_\setu$ is an anchored
space, anchored at $0$. This means that for $\setu\ne\emptyset$, every
function $f_\setu\in F_\setu$ satisfies \eqref{eq:kill} with $a=0$,
and that if $\setu\ne\setv$ then $F_\setu\cap F_\setv$ contains only
the zero function. In Subsection~\ref{sec:A5} we state an explicit formula
for the anchored decomposition to demonstrate how \ref{A5} holds.

In Subsection~\ref{sec:aRKHS}, we define the setting for the case of a
general anchored reproducing kernel and a general domain $D$. In
Subsection~\ref{ssec:ProbForm} we specialize the integration domain to
$[-\tfrac{1}{2}, \tfrac{1}{2}]$ and the kernel to one that leads to a
subspace of the space described in Subsection~\ref{ssec:example}, with a
redefined (but equivalent) norm. In Subsection~\ref{ssec:examples} we
consider again the motivating example from \eqref{eq:example}. In
Subsection~\ref{ssec:lattice} we specialize the quadrature formulas to
lattice rules. Then, in Subsection~\ref{ssec:app1smol} we specialize the
quadrature formulas to Smolyak's quadrature, and finally develop the
algorithm $\overline{\calA}_\eps$.

\subsection{Anchored decomposition and \ref{A5}} \label{sec:A5}

Recall that \ref{A5} is about the cost of evaluating individual terms
$f_\setu$ from $f=\sum_{|\setu|<\infty}f_\setu$. It is shown in
\cite{KSWW10b} that, for the anchored decomposition with anchor at $0$,
the value of $f_\setu(\bsx_\setu)$ can be expressed as a combination of at
most $2^{|\setu|}$ values of $f$, specifically
\[
  f_\setu(\bsx_\setu)\,=\,\sum_{\setv\subseteq\setu}(-1)^{|\setu|-|\setv|}
     f(\bsx_\setv;\bszero)\,,
\]
where $f(\bsx_\setv;\bszero)$ indicates that we evaluate $f(\bsx)$ with
$x_j$ set to $0$ for $j\notin\setv$. Here we assume as in, e.g.,
\cite{KSWW10a}, that we can sample $f(\bsx)$ at some cost provided that
$\bsx$ has only finitely many $x_j$ different from~$0$. More precisely, we
suppose that we can evaluate $f(\bsx_\setu;\bszero)$ at the cost
$\$(|\setu|)$, where $\$:\{0,1,2,\dots\}\to (0,\infty)$ is a given
non-decreasing cost function. It follows that $f_\setu(\bsx_\setu)$ can be
obtained at a cost of
\begin{equation}\label{eq:pound}
   \pounds(|\setu|) \,=\,
    \sum_{k=0}^{|\setu|}\binom{|\setu|}{k}\,\$(k)
   \,\le\, 2^{|\setu|}\,\$(|\setu|).
\end{equation}

\subsection{The anchored RKHS setting} \label{sec:aRKHS}

Let $F=H(K)$ be a reproducing kernel Hilbert space of univariate functions
with the kernel $K:D\times D\to\bbR$. We assume that $K$ has an anchor
$0\in D$, i.e., $K(0,0)=0$. For $g\in F$, it follows from the reproducing
property $g(x) = \il g,K(x,\cdot)\ir_F$ that
$|g(0)|\le\|g\|_F\,\|K(0,\cdot)\|_F=\|g\|_F\,\sqrt{K(0,0)}=0$, implying
$g(0)=0$ and, as a special case, $K(x,0)=0$ for all $x\in D$.

For nonempty $\setu$, the space $F_\setu$ is defined to be the reproducing
kernel Hilbert space with kernel
\begin{equation} \label{eq:Ku}
   K_\setu(\bsx_\setu,\bsy_\setu)\,=\,\prod_{j\in\setu}K(x_j,y_j).
\end{equation}
That is, $F_\setu\,=\,H(K_\setu)$ is the $|\setu|$-fold tensor product of
the space $F$ and consists of functions whose variables are those listed
in $\setu$.

In the space $F_\setu$ so defined, point evaluation is a continuous linear
functional: indeed for $\bsx_\setu \in D^\setu$ and $g_\setu\in F_\setu$
we have $g_\setu(\bsx_\setu)=\il
g_\setu,K_\setu(\bsx_\setu,\cdot)\ir_{F_\setu}$, and hence
$|g_\setu(\bsx_\setu)|\le
\|g_\setu\|_{F_\setu}\,\|K_\setu(\bsx_\setu,\cdot)\|_{F_\setu}
=\|g_\setu\|_{F_\setu} \left(K_\setu(\bsx_\setu,\bsx_\setu)\right)^{1/2},
$
from which it is easily seen that the norm of the point evaluation
functional is
\[
 \sup_{\|g_\setu\|_{F_\setu}\le 1} |g_\setu(\bsx_\setu)|
 \,=\, \left(K_\setu(\bsx_\setu,\bsx_\setu)\right)^{1/2} =
\bigg(\prod_{j\in\setu}K(x_j,x_j)\bigg)^{1/2}.
\]

We may now define $\calF$ to be the class of functions
\begin{equation} \label{eq:class}
 f(\bsx) \,=\, \sum_{|\setu|<\infty}f_\setu(\bsx_\setu)
 \quad\mbox{with}\quad f_\setu \in F_\setu
 \quad\mbox{and}\quad \|f_\setu\|_{F_\setu}\le B_\setu,
\end{equation}
and such that the above series is uniformly convergent for all
$\bsx\in D^\bbN$. The class $\calF$ can be relatively large when the
kernel $K$ is bounded. Suppose that $\|K\|_\infty\,:=\,\sup_{x\in
D}K(x,x)\,<\,\infty$. Then it follows from the reproducing property of
$K_\setu$ and from~\eqref{eq:Bu} that the terms in the decomposition
\eqref{eq:decomp} satisfy
\begin{align*}
  \sum_{|\setu|<\infty}|f_\setu(\bsx_\setu)|
  &\,=\,\sum_{|\setu|<\infty}\il f_\setu, K_\setu(\bsx_\setu,\cdot)\ir_{F_\setu}
  \,\le\,
  \sum_{|\setu|<\infty}\|f_\setu\|_{F_\setu}\,(K_\setu(\bsx_\setu,\bsx_\setu))^{1/2}
  \,\le\,  \sum_{|\setu|<\infty}B_\setu\, \|K\|_\infty^{|\setu|/2}.
\end{align*}
Hence, if
$\sum_{|\setu|<\infty}B_\setu\,\|K\|_\infty^{|\setu|/2}\,<\,\infty$ then
the convergence in \eqref{eq:decomp} is automatically uniform, and we
may define $\calF$ as the class of functions \eqref{eq:class} without
further restriction. On the other hand, if the kernel $K$ is unbounded
then $\|K\|_\infty=\infty$, and the class $\calF$ can be small. An example
of such a situation is provided by
\[
   D\,=\,\bbR \quad\mbox{and}\quad
   K(x,y)\,=\, \frac{|x|+|y|-|x-y|}{2}.
\]

\subsection{Specializing the kernel}\label{ssec:ProbForm}

We now apply our results to a special case of the reproducing kernel
Hilbert space setting.
We let
\begin{equation} \label{eq:ker}
  D \,=\,\left[-\tfrac12,\tfrac12\right],\quad
  \rho(x)=1,\quad
  K(x,y) \,=\, \frac{|x|+|y|-|x-y|}{2},
\end{equation}
and take $F=H(K)$ to be the corresponding reproducing kernel Hilbert
space. Since $K(0,0)=0$, this is clearly an anchored space with the anchor
$0$. Moreover, it can easily be verified that the corresponding norm is
given by
\[
 \|g\|_F^2 \,=\,\int_{-1/2}^{1/2}  |g'(x)|^2 \,\rd x.
\]
For nonempty $\setu$ with $|\setu|<\infty$, let $F_\setu = H(K_\setu)$
with kernel \eqref{eq:Ku}. Then the norm in the space $F_\setu$ is given
by
\begin{equation*}
 \|g_\setu\|_{F_\setu}^2
 \,=\,\int_{\left[-\frac{1}{2},\frac{1}{2}\right]^{|\setu|}}\left|
 \frac{\partial^{|\setu|}}{\partial\bsx_\setu} g_\setu(\bsx_\setu)
 \right|^2 \,\rd\bsx_\setu\quad \mbox{for any\ }g_\setu\in F_\setu,
\end{equation*}
where $\partial^{|\setu|}/\partial \bsx_\setu = \prod_{j\in\setu}
(\partial/\partial x_j)$.
For a function $f\in\calF$ with anchored decomposition $f
=\sum_{|\setu|<\infty} f_\setu$, we now have
\begin{equation}\label{eq:nrm}
    \int_{\left[-\frac{1}{2},\frac{1}{2}\right]^{|\setu|}}\left|
     \frac{\partial^{|\setu|}}{\partial\bsx_\setu}f(\bsx_\setu;\bszero)
     \right|^2 \,\rd\bsx_\setu
    \,=\,  \|f_\setu\|_{F_\setu}^2.
\end{equation}
which follows from $f(\bsx_\setu;\bszero) = \sum_{\setv\subseteq\setu}
f_\setv(\bsx_\setv)$ together with
$\partial^{|\setu|}f_\setv/\partial\bsx_\setu = 0$ if $\setv$ is a proper
subset of $\setu$.

For univariate integration $I(g) = \int_{-1/2}^{1/2} g(x)\,\rd x$
with $g\in F =H(K)$, it is easy to verify that $c_0 := \|I\|_F =
12^{-1/2}$. Due to the tensor product structure of $F_\setu$, we have
from~\eqref{eq:Cu} that
\begin{equation}\label{eq:C0ex}
  C_{\setu}\,=\,\|I_\setu\| \,=\, c_0^{|\setu|} \,=\, 12^{-|\setu|/2}.
\end{equation}

\subsection{Examples of integration problems}\label{ssec:examples}

Consider again the example in \eqref{eq:example}, but now with the
more convenient choice $F_\setu=H(K_\setu)$. For $\setu\subset\bbN$, we
then have
\begin{equation}\label{eq:trunc}
 f(\bsx_\setu;\bszero)\,=\,\frac{1}{1+\sum_{j\in\setu} x_j/j^2}
 \quad\mbox{and}\quad
  \frac{\partial^{|\setu|}}{\partial\bsx_\setu} f(\bsx_\setu;\bszero)
  \,=\,\frac{(-1)^{|\setu|}\,|\setu|!}{(\prod_{j\in\setu}j^2)
  (1+\sum_{j\in\setu} x_j/j^2)^{|\setu|+1}},
\end{equation}
and hence from~\eqref{eq:nrm}, together with $x_j\ge -1/2$
and $\sum_{j=1}^\infty 1/j^2=\pi^2/6$,
\[
   \|f_\setu\|_{F_\setu}\,\le\, \left(1-\tfrac{\pi^2}{12}\right)^{-1-|\setu|}
  \, |\setu|!\,\prod_{j\in\setu}j^{-2}\,=:\,B_\setu.
\]
When combined with $\|K\|_\infty = 1/2$ and~\eqref{eq:C0ex}, this
gives
\[
 \|K\|_\infty^{|\setu|/2}\, B_\setu =
 2^{-|\setu|/2}\left(1-\tfrac{\pi^2}{12}\right)^{-1-|\setu|}\,
 |\setu|!\,\prod_{j\in\setu}j^{-2}
 \quad\mbox{and}\quad
 C_{\setu}\,B_\setu\,=\, 12^{-|\setu|/2}
   \left(1-\tfrac{\pi^2}{12}\right)^{-1-|\setu|}\,
   |\setu|!\,\prod_{j\in\setu}j^{-2}.
\]
It follows from the lemma below that
$\sum_{|\setu|<\infty}\|K\|_\infty^{|\setu|/2} B_\setu <\infty$ (and hence
the convergence in \eqref{eq:decomp} is uniform), and that $\alpha_0$ in
\ref{A4} is given by $\alpha_0\,=\,{\rm
decay}(\{C_{\setu}\,B_{\setu}\}_\setu) = 2$.

\begin{lemma}\label{lem:u}
Let $b_1\ge 0$. Suppose the sequence $\{g_j\}_{j\ge 1}$ with $g_j>0$ has
\begin{equation*}
{\rm decay}(\{g_j\}_{j\ge 1})\,:=\, \sup\bigg\{\tau\ :\
   \sum_{j=1}^\infty g_j^{1/\tau}<\infty\bigg\} \,=:\, b_2 \,
    >\, \max(b_1,0).
\end{equation*}
Then
\[
{\rm decay}\bigg(\bigg\{(|\setu|!)^{b_1}\prod_{j\in\setu}g_j\bigg\}_\setu
    \bigg)\,=\,b_2.
\]
\end{lemma}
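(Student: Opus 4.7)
The plan is to fix $\tau>0$ and analyze the series
\[
 S(\tau) \,:=\, \sum_{|\setu|<\infty}\bigl((|\setu|!)^{b_1}\,\tprod_{j\in\setu}g_j\bigr)^{1/\tau}
 \,=\, \sum_{k=0}^{\infty}(k!)^{b_1/\tau}\sum_{|\setu|=k}\prod_{j\in\setu}g_j^{1/\tau},
\]
showing that $S(\tau)<\infty$ for every $\tau\in(\max(b_1,0),b_2)$ and $S(\tau)=\infty$ for every $\tau>b_2$. This would give the two inequalities \(\mathrm{decay}(\cdot)\ge b_2\) and \(\mathrm{decay}(\cdot)\le b_2\), respectively, and hence the claimed equality.

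The easy direction is the upper bound. For $\tau>b_2$, keep only the singletons $|\setu|=1$ in the definition of $S(\tau)$: this contribution alone equals $\sum_{j\ge 1}g_j^{1/\tau}$, which is infinite by the definition of $b_2$. Hence $\mathrm{decay}(\cdot)\le b_2$.

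For the lower bound, I would use the standard elementary symmetric inequality
\[
 \sum_{|\setu|=k}\prod_{j\in\setu}a_j \,\le\, \frac{1}{k!}\bigg(\sum_{j\ge 1}a_j\bigg)^{k},
\]
applied to $a_j=g_j^{1/\tau}$. Writing $A(\tau):=\sum_{j\ge 1}g_j^{1/\tau}$, this yields
\[
 S(\tau)\,\le\, \sum_{k=0}^{\infty}\frac{A(\tau)^{k}}{(k!)^{\,1-b_1/\tau}}.
\]
Now restrict to $\tau\in(\max(b_1,0),b_2)$. Then $A(\tau)<\infty$ by the definition of $b_2$, and the exponent $c:=1-b_1/\tau$ is strictly positive (whether $b_1\le 0$ or $0<b_1<\tau$), so the ratio of consecutive terms satisfies $A(\tau)/(k+1)^{c}\to 0$. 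The series therefore converges, giving $S(\tau)<\infty$ for all such $\tau$, hence $\mathrm{decay}(\cdot)\ge b_2$.

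I do not expect any serious obstacle: the only subtlety is the case $b_1>0$, where the factor $(k!)^{b_1/\tau}$ is growing and one must be sure it is dominated by $1/k!$ from the elementary symmetric bound. The hypothesis $b_2>\max(b_1,0)$, which allows one to choose $\tau>b_1$ strictly, is exactly what supplies the positive exponent $c=1-b_1/\tau$ needed for the ratio test to close the argument.
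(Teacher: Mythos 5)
Your proposal is correct and follows essentially the same route as the paper: the same grouping by cardinality, the same elementary symmetric function inequality $(\sum_j a_j)^\ell \ge \ell!\sum_{|\setu|=\ell}\prod_{j\in\setu}a_j$, and the same ratio-test conclusion using $\tau>b_1$. The only difference is that you spell out the (trivial) upper bound via the singleton terms, which the paper leaves implicit.
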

\begin{proof}
This is a special case of \cite[Theorem~5]{DicGne14b}. Here we provide a
simple proof. For any $\tau\in(b_1,b_2)$,
\[
\sum_{|\setu|<\infty}(|\setu|!)^{b_1/\tau}\prod_{j\in\setu} g_j^{1/\tau}
\,=\, \sum_{\ell=0}^\infty
  (\ell!)^{b_1/\tau}\sum_{|\setu|=\ell}\prod_{j\in\setu}g_j^{1/\tau}
   \le \sum_{\ell=0}^\infty (\ell!)^{b_1/\tau-1}
    \bigg(\sum_{j=1}^\infty g_j^{1/\tau}\bigg)^\ell\,<\,\infty,
\]
where the first inequality follows from $(\sum_{j=1}^\infty a_j)^\ell
\ge\ell!\sum_{|\setu|=\ell}\prod_{j\in\setu}a_j$, and in the last
expression the finiteness of the sum over $j$ follows from $\tau <b_2$,
and the finiteness of the sum over $\ell$ follows from the ratio test
using $\tau>b_1$.
\end{proof}

This example motivates us to consider in the rest of this section the case
in which \ref{A2} holds with
\begin{equation}\label{eq:POD}
  B_\setu\,=\, (|\setu|!)^{b_1} \, \mu\,  \prod_{j\in\setu}
  (\kappa\, j)^{-b_2}
  \quad\mbox{for some}\quad b_2 > \max(b_1,0)
\quad\mbox{and some}\quad \mu,\kappa>0.
\end{equation}
The lemma with~\eqref{eq:C0ex} then gives
\[
  \alpha_0 \,=\, {\rm decay}(\{C_{\setu}\,B_{\setu}\}_\setu) \,=\, b_2.
\]
It is easy to verify the following proposition, see \cite{PW11}.

\begin{proposition}\label{prp:2}
For $C_{\setu}=12^{-|\setu|/2}$ and $B_\setu$ satisfying~\eqref{eq:POD},
for $\calU(\e,\alpha)$ defined by~\eqref{eq:def_U} with $\e>0$ and
$\alpha\in (1,b_2)$, we have
\[  d(\e)\,:=\,\max_{\setu\in\setU(\e,\alpha)}|\setu|\,=\,
    O\left(\frac{\ln(1/\e)}{\ln(\ln(1/\e))}\right)
    \qquad\mbox{as}\quad \e\to 0.
\]
If $\pounds(d)=\ex^{O(d)}$ as $d\to\infty$, then the cost of evaluating
$f_\setu(\bsx_\setu)$ for $\setu\in\calU(\eps,\alpha)$ is
\[
   \pounds(d(\eps))\,=\,(1/\eps)^{O(1/\ln(\ln(1/\e)))}
   \qquad\mbox{as}\quad \e\to0.
\]
\end{proposition}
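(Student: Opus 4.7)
The plan is to translate membership in $\calU(\e,\alpha)$ into a concrete size inequality and then estimate the maximal admissible cardinality via Stirling.

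First, abbreviate $S := \sum_{|\setv|<\infty}(C_\setv B_\setv)^{1/\alpha}$, which is finite because $\alpha<b_2=\alpha_0$ and Lemma~\ref{lem:u} applies. By \eqref{eq:def_U}, $\setu\in\calU(\e,\alpha)$ iff
\[
C_\setu B_\setu \,>\, \delta(\e)\,:=\,\left(\tfrac{\e}{2S}\right)^{\alpha/(\alpha-1)}.
\]
Plugging in $C_\setu=12^{-|\setu|/2}$ and the POD form \eqref{eq:POD} for $B_\setu$, and noting that for fixed cardinality $k$ the product $\prod_{j\in\setu}(\kappa j)^{-b_2}$ is maximized on $\setu=\{1,\ldots,k\}$ (where it equals $\kappa^{-b_2 k}(k!)^{-b_2}$), I would define
\[
M_k \,:=\, \max_{|\setu|=k} C_\setu B_\setu \,=\, \mu\,12^{-k/2}\,\kappa^{-b_2 k}\,(k!)^{b_1-b_2}.
\]
Since $b_1-b_2<0$, Stirling gives $\ln M_k = -(b_2-b_1)\,k\ln k + O(k)$, where the $O(k)$ constant depends only on $\mu,\kappa,b_1,b_2$.

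Next, $d(\e)$ is by definition the largest integer $k$ for which $M_k>\delta(\e)$. Taking logs, this amounts to
\[
(b_2-b_1)\,k\ln k \,\le\, \tfrac{\alpha}{\alpha-1}\ln\tfrac{2S}{\e} + O(k).
\]
The main (mildly) nontrivial step is to invert the inequality $k\ln k\le C\ln(1/\e)+O(k)$ to obtain an explicit bound on $k$. A standard computation — comparing $k\ln k$ with $\ln(1/\e)$ and bootstrapping $\ln k \sim \ln\ln(1/\e)$ — yields $d(\e)=O(\ln(1/\e)/\ln\ln(1/\e))$ as $\e\to 0$, which is the first assertion.

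Finally, combining the cardinality bound with the hypothesis $\pounds(d)=\ex^{O(d)}$:
\[
\pounds(d(\e)) \,=\, \exp\bigl(O(d(\e))\bigr)
 \,=\, \exp\!\Bigl(O\bigl(\tfrac{\ln(1/\e)}{\ln\ln(1/\e)}\bigr)\Bigr)
 \,=\, (1/\e)^{O(1/\ln\ln(1/\e))},
\]
which is the second assertion. The only genuine obstacle is the Stirling-based inversion of $k\ln k\lesssim\ln(1/\e)$; the rest is bookkeeping and an appeal to Lemma~\ref{lem:u} to justify finiteness of $S$.
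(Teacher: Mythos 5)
Your argument is correct. The paper itself gives no proof of this proposition (it only remarks that it is ``easy to verify'' and points to \cite{PW11}), and your computation is exactly the standard one that reference relies on: reduce membership in $\calU(\e,\alpha)$ to $C_\setu B_\setu>(\e/(2S))^{\alpha/(\alpha-1)}$, observe that among sets of cardinality $k$ the quantity $C_\setu B_\setu$ is maximized at $\setu=\{1,\dots,k\}$ where it behaves like $\exp(-(b_2-b_1)k\ln k+O(k))$, and invert $k\ln k\lesssim\ln(1/\e)$. All the individual steps check out, including the finiteness of $S$ for $\alpha<b_2=\alpha_0$ via Lemma~\ref{lem:u} and the final rewriting $\ex^{O(\ln(1/\e)/\ln\ln(1/\e))}=(1/\e)^{O(1/\ln\ln(1/\e))}$; the only cosmetic point is that your symbol $\delta(\e)$ collides with the paper's use of $\delta(\varepsilon)$ in Corollary~\ref{col:firstappl}.
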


We end this subsection with the following two remarks.

\begin{remark}\label{rem:1}
Suppose that evaluation of $f([\bsx;\setu])$ incurs exponentially large
cost $\$(|\setu|)=\ex^{O(|\setu|)}$. Then it follows from~\eqref{eq:pound}
that the cost $\pounds(|\setu|)$ of obtaining $f_\setu(\bsx_\setu)$ for
$u\in\calU(\eps,\alpha)$ is still of order $\ex^{O(|\setu|)}$, and hence,
by Proposition~\ref{prp:2}, the cost is only
\[
  (1/\e)^{O(1/\ln(\ln(1/\e)))}.
\]
\end{remark}

\begin{remark}\label{rem:2}
Although Proposition~\ref{prp:2} limits very efficiently the cardinality
of the largest subset in the active set, Proposition~\ref{prp:1} suggests
that the cardinality of the active set itself is still polynomial in
$1/\e$. In particular, the active set may contain $\{1\},\dots,\{j\}$ for
a large value of~$j$. For the example in~\eqref{eq:example} we can use the
following argument to limit the size of the largest label $j$ that needs
to be considered. This is achieved by estimating the truncation error more
accurately for the specific example, rather than by applying bounds on the
worst case error.

For this example, it follows from~\eqref{eq:example} and~\eqref{eq:trunc}
together with Taylor's theorem \textnormal{(}expanding the univariate
function $1/(a+y)$ about $y=0$, with $a = 1+\sum_{j\in\setu} x_j/j^2$ and
$y = \sum_{j\notin\setu}x_j/j^2$\textnormal{)} that
\[
    f(\bsx)-f(\bsx_\setu;\bszero)\, =\,-\frac{1}{(1+\sum_{j\in\setu}
   \frac{x_j}{j^2})^2}\sum_{j\notin\setu}\frac{x_j}{j^2}
   + \frac{1}{(1+\zeta(\bsx,\setu))^3}
   \bigg(\sum_{j\notin\setu}\frac{x_j}{j^2}\bigg)^2,
\]
for some $\zeta(\bsx,\setu) \in (-\frac{\pi^2}{12},\frac{\pi^2}{12})$.
Since the integral of the first term vanishes, we have
\[
  \calI[f(\cdot)-f(\cdot_\setu;\bszero)]
  \,\le\, \frac{1}{(1-\frac{\pi^2}{12})^3}\,
  \int_{[-\tfrac{1}{2},\tfrac{1}{2}]^{\bbN}}
  \bigg(\sum_{j\notin\setu}\frac{x_j}{j^2}\bigg)^2 \,\rd \bsx
  \,=\,
  \frac{1}{12\,(1-\frac{\pi^2}{12})^3}\,\sum_{j\notin \setu}\frac{1}{j^4}.
\]
In particular, if we choose $\setu=\{1:\ell\}:=\{1,2,\ldots,\ell\}$ then
we have
\[
 \calI[f(\cdot)-f(\cdot_{\{1:\ell\}};\bszero)]\,\le\,
 \frac1{36\,(1-\frac{\pi^2}{12})^3}\,\ell^{-3}.
\]
We now take $\ell=\ell_\e$ so that the right hand side is less than
$\e/3$, implying $\ell=\ell_\e=\Omega(\e^{-1/3})$. Then we replace the sum
$\sum_{|\setv|<\infty} (C_\setv\, B_\setv)^{1/\alpha}$ in \eqref{eq:def_U}
by $\sum_{\setv\in \{1:\ell\}} (C_\setv\, B_\setv)^{1/\alpha}$, replace
\eqref{eq:term1} by
\[
  \sum_{\setu\in \{1:\ell\} \setminus\calU(\varepsilon)} |I_\setu (f_\setu)| \le \frac{\e}{3},
\]
and replace $\e/2$ in~\eqref{eq:term2} by $\e/3$. Then MDM can be run as
usual, and will still give an error bounded by $\e$, but with the
simplification that subsets containing numbers bigger than $\ell_\e$ need
never be considered.  In effect, for this example the problem can be
considered as an $\ell_\e$-dimensional problem, rather than as an
infinite-dimensional one.
\end{remark}

\subsection{Specializing the quadrature to lattice rules}\label{ssec:lattice}

It can be shown by an adaptation of known results (see, e.g.,
\cite[Theorem~5.9]{DKS13}) that in the case of the kernel in
Subsection~\ref{ssec:ProbForm} we can construct \emph{shifted lattice
rules} with $n$ points in $|\setu|$ dimensions, where $n\ge 3$ is prime,
such that \eqref{eq:Gu} holds for all $q\in [1/2,1)$,
with
\[
  G_{\setu,q} \,=\, 2^q \left(\frac{2\zeta(1/q)}{(2\pi^2)^{1/(2q)}}
  +12^{-1/(2q)}\right)^{|\setu|q},
\]
where $\zeta(x) = \sum_{k=1}^\infty k^{-x}$ is the Riemann zeta function.
More precisely, the above result is adapted from known results for lattice
rules by setting the weight $\gamma_\setu$ for the particular $\setu$ to
be $1$ and all other weights to be $0$. We follow the analysis of
Subsection~\ref{sec:algs}, but instead of taking $n_\setu = \lfloor
h_\setu \rfloor$ we take $n_\setu$ to be the largest prime number such
that $3\le n_\setu\le h_\setu$, or set $n_\setu= 0$ if this is not
achievable. The remaining analysis in that subsection then applies.

More precisely, a shifted lattice rule with $n$ points for a $d$-variate
function $g$ defined over $[-1/2,1/2]^d$ takes the form
\[
  \frac{1}{n} \sum_{i=1}^n g\left(\left\{\frac{i\bsz}{n} +
  \bsDelta\right\} - \boldsymbol{\tfrac{1}{2}}\right),
\]
where $\bsz\in \bbZ^d$ is the \emph{generating vector} and $\bsDelta\in
[0,1]^d$ is the \emph{shift}. The braces around a vector indicate that we
take the fractional part of each component in the vector, and the
subtraction by $1/2$ from all components takes care of the translation
from the standard unit cube $[0,1]^d$ to $[-1/2,1/2]^d$. A good generating
vector for the lattice rule can be constructed using the fast
component-by-component algorithm, see e.g., \cite{Nuy2014}. The shift can
be generated randomly from the uniform distribution on $[0,1]^d$ (in this
case the error bound holds in the root mean square sense), or the shift
can be generated repeatedly until the desired error bound is achieved (in
this case the error bound holds deterministically but the result is not
fully constructive). See, e.g., \cite{DKS13} for details.

\subsection{Specializing the quadrature to Smolyak's method}
            \label{ssec:app1smol}

We now apply Smolyak's \cite{Smol63} quadrature scheme to the MDM in the
RKHS context of this section, with kernel~\eqref{eq:ker}. Smolyak's
construction is often used for tensor-product problems.  It is built from
a single family of univariate quadrature rules, and for every space
$F_\setu$ of a given dimensionality $d = |\setu|$ we use the same family
of rules.
In the following we take the univariate quadrature
rules to be trapezoidal rules since in this setting they achieve the
optimal convergence rate of order $1$.

For $d\ge 1$, Smolyak's construction for approximating a $d$-variate
integral is given by the formula
\[
  Q_{d,\kappa}
  \,=\, \sum_{\bsi\in \bbN^d,\,|\bsi|\le\kappa}
  \bigotimes_{j=1}^d (U_{i_j} - U_{i_j-1}),
\]
where $|\bsi| = i_1+i_2 + \cdots +i_d$, $U_0$ is the zero algorithm, and
each $U_i$ for $i\ge 1$ is a (composite) trapezoidal rule with $2^i+1$
equally spaced points $t_{i,k} = -1/2 + k/2^i$, $k=0,\dots,2^i$.
Actually, we only need $2^i$ evaluations for $U_i$ since for every $g\in
F$ the value $g(0)=0$ is for free. Note that $Q_{1,\kappa}=U_\kappa$. In
general, if $\kappa < d$ then $Q_{d,\kappa}\equiv0$.

It is easy to verify that for univariate integration we have
\[
  \int_{-1/2}^{1/2}g(t)\,\rd t-U_i(g)\,=\,\int_{-1/2}^{1/2}
   g'(t)\,K_i(t)\,\rd t
  \, \le\, \|g\|_F\,\bigg( \int_{-1/2}^{1/2} K_i^2(t)\,\rd t\bigg)^{1/2},
\]
with $K_i(t) = (t_{i,k}+t_{i,k+1})/2-t$ if $t\in[t_{i,k},t_{i,k+1})$.
Hence the worst case error of $U_i$ is the $L_2$ norm of $K_i$, which is
\[
  \|I-U_i\|\,=\,
   \frac1{\sqrt{12}} \, 2^{-i}\qquad\mbox{for $i=0,1,\dots$}.
\]

{}From \cite[Lemma\,1]{WW95} we know that $Q_{d,\kappa}$ can be written in
an equivalent form as
\begin{equation}\label{smolform}
  Q_{d,\kappa}\,=\,\sum_{\bsi\in P(d,\kappa)}(-1)^{\kappa-|\bsi|}\,
  \binom{d-1}{\kappa-|\bsi|}\, \bigotimes_{\ell=1}^dU_{i_\ell},
\end{equation}
where $P(d,\kappa) =
      \left\{\bsi\in\bbN^d\ :\ \kappa-d+1\le|\bsi|\le\kappa\right\}$.
Note that this holds for general building blocks $U_i$. {}From \cite[Lemma
6]{WW95} we then have the following proposition.

\begin{proposition}\label{prp:err}
For $d,\kappa\in\bbN$ with $\kappa\ge d$, the error of the
trapezoidal-Smolyak algorithm is
\[
 \|I_{\{1:d\}}-Q_{d,\kappa}\|
\,\le\,2^{-\kappa-1}\,3^{-d/2}\,
  \sqrt{\binom{\kappa}{d-1}}
  \,\le\,2^{-\kappa-1}\,3^{-d/2}\,\sqrt{\frac{\kappa^{d-1}}{(d-1)!}}.
\]
\end{proposition}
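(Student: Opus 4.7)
The plan is to run a standard hierarchical Smolyak error analysis in the anchored tensor-product RKHS $F_{\{1:d\}}$, exploiting orthogonality of the level differences. Setting $\Delta_i := U_i - U_{i-1}$ with $U_0 := 0$, the already-established bound $\|I-U_i\|\to 0$ gives the telescoping identity $I = \sum_{i\ge 1}\Delta_i$ in the norm of $F^*$. Tensoring and reorganising (precisely the content of \eqref{smolform}) yields the representation
\[
 I_{\{1:d\}} - Q_{d,\kappa}
 \;=\; \sum_{\bsi\in\bbN^d,\;|\bsi|>\kappa}\,\bigotimes_{\ell=1}^d\Delta_{i_\ell}.
\]

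The heart of the argument is orthogonality. In $F=H(K)$ with $K(x,y)=\tfrac12(|x|+|y|-|x-y|)$, the space of piecewise-linear splines on the dyadic nodes $\{t_{i,k}\}$ is an RKHS-closed subspace and the nodal interpolant coincides with the $F$-orthogonal projection $P_i$ onto it; hence $U_i = I\circ P_i$. Nestedness of the meshes gives $P_{i-1}P_i = P_{i-1}$, so $P_i-P_{i-1}$ projects onto pairwise orthogonal subspaces $V_i\subset F$, the Riesz representer of $\Delta_i$ lives in $V_i$, and therefore $\langle\Delta_i,\Delta_{i'}\rangle_{F^*}=0$ for $i\ne i'$. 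This yields $\|\Delta_i\|^2 = \|I-U_{i-1}\|^2-\|I-U_i\|^2=\tfrac14\cdot 4^{-i}$, and lifting orthogonality to the tensor product (then grouping multi-indices by $s=|\bsi|$) produces
\[
 \|I_{\{1:d\}}-Q_{d,\kappa}\|^2
 \;=\; \sum_{|\bsi|>\kappa}\prod_{\ell=1}^d \|\Delta_{i_\ell}\|^2
 \;=\; 4^{-d}\sum_{s>\kappa}\binom{s-1}{d-1}4^{-s}.
\]

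What remains is a closed-form tail estimate. Shifting $s\mapsto\kappa+1+t$ and applying Vandermonde's convolution $\binom{\kappa+t}{d-1}=\sum_{j}\binom{\kappa}{d-1-j}\binom{t}{j}$ gives
\[
 \sum_{s>\kappa}\binom{s-1}{d-1}4^{-s}
 \;=\; \frac{4^{-\kappa}}{3^{d}}\sum_{m=0}^{d-1}3^m\binom{\kappa}{m},
\]
which reduces the proposition to the elementary inequality $\sum_{m=0}^{d-1}3^m\binom{\kappa}{m}\le 4^{d-1}\binom{\kappa}{d-1}$ valid for $\kappa\ge d$. I would dispatch this by noting that $\binom{\kappa}{m}/\binom{\kappa}{d-1}$ is monotone decreasing in $\kappa$ (since its factors $(j+1)/(\kappa-j)$ shrink) and equals $\binom{d}{m}/d$ at $\kappa=d$, then using $\sum_{m<d}3^m\binom{d}{m}=4^d-3^d$ to reduce everything to the scalar bound $(4-d)(4/3)^d\le 4$ (trivial for $d\ge 4$, direct check for $d\in\{1,2,3\}$, with equality at $d=1$). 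Taking square roots yields the first stated bound, and the second is immediate from $\binom{\kappa}{d-1}\le\kappa^{d-1}/(d-1)!$.

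The main obstacle is the orthogonality step: any argument based only on triangle inequalities in $F^*$ loses a factor of order $\sqrt{\binom{\kappa}{d-1}}$ relative to the target. Recognising the anchored kernel as a Brownian-motion-type covariance, so that piecewise-linear interpolation coincides with the RKHS projection, is the crucial structural observation that makes the squared operator norm decouple into a single sum.
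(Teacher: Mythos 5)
Your proof is correct. Note that the paper itself offers no argument for this proposition --- it simply invokes \cite[Lemma 6]{WW95} --- so what you have written is in effect a self-contained reconstruction of the cited result in this specific setting. The mechanism you identify is the right one, and it is the one that the form of the bound forces: because $K(x,y)=\tfrac12(|x|+|y|-|x-y|)$ is a (two-sided) Brownian-motion covariance, the span of the representers $K(t_{i,k},\cdot)$ is exactly the space of piecewise linear splines on the level-$i$ dyadic mesh vanishing at $0$, the minimum-norm interpolant is the nodal piecewise linear interpolant, and integrating it gives the trapezoidal rule, so $U_i=I\circ P_i$ with nested orthogonal projections. From there Pythagoras gives $\|\Delta_i\|^2=\|I-U_{i-1}\|^2-\|I-U_i\|^2=\tfrac14 4^{-i}$, the tensor orthogonality turns the squared operator norm of the Smolyak remainder into the exact single sum $4^{-d}\sum_{s>\kappa}\binom{s-1}{d-1}4^{-s}$, and your Vandermonde computation plus the elementary inequality $\sum_{m=0}^{d-1}3^m\binom{\kappa}{m}\le 4^{d-1}\binom{\kappa}{d-1}$ (which I checked reduces, as you say, to $(4-d)(4/3)^d\le 4$ via the monotonicity in $\kappa$ of $\binom{\kappa}{m}/\binom{\kappa}{d-1}$) delivers precisely $2^{-\kappa-1}3^{-d/2}\sqrt{\binom{\kappa}{d-1}}$, with equality at $d=1$. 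Your closing remark is also apt: a pure triangle-inequality estimate in $F_\setu^*$ cannot produce the square root on the binomial coefficient (compare the non-Hilbert bound in Section 6, which indeed carries $\binom{\kappa_\setu}{|\setu|-1}$ without a square root), so the projection/orthogonality structure is genuinely necessary and not just a convenience. The one thing worth adding for completeness is a sentence confirming that $H(K_\setu)$ is the Hilbert tensor product of the univariate spaces (Aronszajn), so that the tensor Pythagoras step is legitimate; with that, the argument is complete.
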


The following proposition provides bounds on the number $n(d,\kappa)$ of
function evaluations used by the trapezoidal-Smolyak algorithm
$Q_{d,\kappa}$.

\begin{proposition}\label{prp:crd}
For $d,\kappa\in\bbN$ with $\kappa\ge d$ we have $n(1,\kappa) =
2^{\kappa}$ and
\[
    2^{\kappa-d+1} \,\le\,
    n(d,\kappa)
    \,\le\,2^{\kappa-d+1}\,\mathrm{e}^{d/2-1}\,
     \frac{\kappa^{d-1}}{(d-1)!}
    \qquad\mbox{for}\quad d\ge 2.
\]
\end{proposition}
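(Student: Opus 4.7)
The plan is to identify the sample points of $Q_{d,\kappa}$ via the nested structure of the $U_i$'s, count them through a disjoint decomposition, and then establish the lower and upper bounds by direct computation and induction on $d$.

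First I would note that for $i\ge 1$ the rule $U_i$ evaluates the integrand at $2^i+1$ equispaced nodes, one of which is the free midpoint $0$; thus the effective evaluation set $Y_i$ has cardinality $2^i$, and by the choice of nodes, $Y_{i-1}\subset Y_i$. Put $\Delta_1:=Y_1=\{-1/2,1/2\}$ and $\Delta_i:=Y_i\setminus Y_{i-1}$ for $i\ge 2$. Then $|\Delta_1|=2$, $|\Delta_i|=2^{i-1}$ for $i\ge 2$, the $\Delta_i$'s are pairwise disjoint, and $Y_i=\bigsqcup_{j\le i}\Delta_j$. The sample points of $Q_{d,\kappa}$ form the union $\bigcup_{\bsi\in\bbN^d,\,|\bsi|\le\kappa}Y_{i_1}\times\cdots\times Y_{i_d}$. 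Expanding each factor in the $\Delta$'s and using disjointness, this rewrites as a disjoint union; grouping by $j_d$ gives
\[
  n(d,\kappa) \,=\, \sum_{\bsj\in\bbN^d,\,|\bsj|\le\kappa}\prod_{l=1}^d|\Delta_{j_l}|
  \,=\, \sum_{j=1}^{\kappa-d+1}|\Delta_j|\,n(d-1,\kappa-j),
\]
with base cases $n(d,\kappa)=0$ for $\kappa<d$ and $n(1,\kappa)=2+\sum_{j=2}^\kappa 2^{j-1}=2^\kappa$.

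The lower bound is immediate: keep the single term with $\bsj=(\kappa-d+1,1,\ldots,1)$, contributing $|\Delta_{\kappa-d+1}|\cdot 2^{d-1}$, which equals $2^d$ if $\kappa=d$ and $2^{\kappa-1}$ if $\kappa\ge d+1$; both exceed $2^{\kappa-d+1}$ for $d\ge 2$. For the upper bound I would proceed by induction on $d\ge 2$. The base case $d=2$ is exact: substituting $n(1,m)=2^m$ into the recursion gives $n(2,\kappa)=2\cdot 2^{\kappa-1}+(\kappa-2)\cdot 2^{\kappa-1}=\kappa\cdot 2^{\kappa-1}$, which meets the claimed bound. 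For the inductive step, assuming $n(d-1,m)\le 2^{m-d+2}\,\mathrm{e}^{(d-3)/2}\,m^{d-2}/(d-2)!$ and treating $j=1$ separately from $j\ge 2$ (the $|\Delta_1|=2$ is the only ``anomaly'' relative to the clean $2^{j-1}$), the recursion yields
\[
  n(d,\kappa) \,\le\, \frac{2^{\kappa-d+1}\mathrm{e}^{(d-3)/2}}{(d-2)!}\bigg[(\kappa-1)^{d-2}+\sum_{j=1}^{\kappa-d+1}(\kappa-j)^{d-2}\bigg].
\]
Bounding the sum by the integral $\int_0^\kappa t^{d-2}\,\mathrm{d}t=\kappa^{d-1}/(d-1)$ and using $\mathrm{e}^{d/2-1}/\mathrm{e}^{(d-3)/2}=\mathrm{e}^{1/2}$, the target bound reduces to the inequality $(d-1)(\kappa-1)^{d-2}/\kappa^{d-1}\le\mathrm{e}^{1/2}-1$. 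A short calculation shows the left-hand side is decreasing in $\kappa\ge d$, so it suffices to check it at $\kappa=d$, where it equals $(1-1/d)^{d-1}$; the latter is at most $1/2$ at $d=2$ and tends to $\mathrm{e}^{-1}$ as $d\to\infty$, always strictly below $\mathrm{e}^{1/2}-1\approx 0.648$.

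The main difficulty I anticipate is keeping the ``$j=1$ anomaly'' from consuming more than the available $\mathrm{e}^{1/2}$ slack per inductive step. A uniform bound $|\Delta_j|\le 2^j$ would cost a factor $2/\sqrt{\mathrm{e}}>1$ per dimension and yield only the substantially weaker estimate $n(d,\kappa)\le 2^\kappa\kappa^{d-1}/(d-1)!$; the split between $j=1$ and $j\ge 2$ in the recursion is precisely what allows the sharp $\mathrm{e}^{d/2-1}$ factor of the proposition to emerge.
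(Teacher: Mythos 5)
Your proof is correct and follows essentially the same route as the paper: you derive the identical recursion $n(d,\kappa)=2\,n(d-1,\kappa-1)+\sum_{j=2}^{\kappa-d+1}2^{j-1}n(d-1,\kappa-j)$ from the nestedness of the trapezoidal point sets and then induct on $d$ with an integral comparison supplying the $\mathrm{e}^{1/2}$ factor per step. The only (cosmetic) difference is that the paper normalizes by $2^{\kappa-d+1}$ and absorbs the boundary term $b(d,\kappa-1)$ into the sum via monotonicity, whereas you carry it explicitly and close the induction with the elementary inequality $(d-1)(\kappa-1)^{d-2}/\kappa^{d-1}\le \mathrm{e}^{1/2}-1$; both work.
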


\begin{proof}
Clearly $n(1,\kappa) = 2^{\kappa}$. Let $d\ge 2$. The lower bound on
$n(d,\kappa)$ is trivial. To obtain the upper bound, we count only those
points used by $Q_{d,\kappa}$ that correspond to $\bsi=(i_1,\ldots,i_d)\in
P(d,\kappa)$ in~\eqref{smolform} with $|\bsi|=\kappa$, but do not count
those points with any component equal to $0$. The number of points
corresponding to such $\bsi$ with $i_d=1$ is $2\,n(d-1,\kappa-1)$. For
successive $s=2,3,\ldots,\kappa-d+1$, the number of points corresponding
to $\bsi$ with $i_d=s$ that have not been counted yet is
$(2^s-2^{s-1})\,n(d-1,\kappa-s)$. This yields
\[
  n(d,\kappa) \,=\, 2\,n(d-1,\kappa-1)+
  \sum_{s=2}^{\kappa-d+1}2^{s-1}\,n(d-1,\kappa-s),
  \qquad d\ge 2.
\]
We now define
\[
  b(d,\kappa) \,:=\, \frac{n(d,\kappa)}{2^{\kappa-d+1}},\qquad\kappa\ge d\ge 1.
\]
Then we have $b(1,\kappa)=1$ and
\begin{equation}\label{ndk}
  b(d,\kappa)\,=\,b(d-1,\kappa-1)+\sum_{s=d-1}^{\kappa-1}b(d-1,s),\qquad d\ge 2.
\end{equation}

It remains to show that for $d\ge 2$
\begin{equation}\label{bebe}
  b(d,\kappa)\,\le\,\frac{\mathrm{e}^{d/2-1}\,\kappa^{d-1}}{(d-1)!}.
\end{equation}
Since $b(2,\kappa)=\kappa$ owing to~\eqref{ndk}, inequality~\eqref{bebe}
holds for $d=2$. Suppose~\eqref{bebe} holds for some $d\ge2$. Using
$b(d,\kappa-1)\le b(d,\kappa)$ and the induction hypothesis, we obtain
from~\eqref{ndk} that
$$
   b(d+1,\kappa)\,=\,b(d,\kappa-1)+\sum_{s=d}^{\kappa-1}b(d,s)
   \,\le\,\sum_{s=d}^\kappa b(d,s)
   \,\le\, \frac{\mathrm{e}^{d/2-1}}{(d-1)!}\sum_{s=d}^\kappa s^{d-1}
   \,\le\, \frac{\mathrm{e}^{(d+1)/2-1}\,\kappa^d}{d!},
$$
where we used the fact that $x\mapsto x^{d-1}$ is a convex function so
that
$$
  \sum_{s=d}^\kappa s^{d-1}\,\le\,\int_{d-1/2}^{\kappa+1/2}x^{d-1}\,\rd x
    \,\le\,\frac{(\kappa+1/2)^{d}}{d}
    \,\le\,\frac{\mathrm e^{1/2}\kappa^{d}}{d},
$$
and in the last inequality we used $1+y\le e^y$. Thus~\eqref{bebe} follows
by induction.
\end{proof}

We now turn to the construction of the algorithm
\begin{equation}\label{Abar}
  \overline{\calA}_\eps(f)\,=\,\sum_{\setu\in\setU(\e,\alpha)} A_{\setu,n_\setu}(f_\setu).
\end{equation}
(The ``bar'' in our notation for the algorithm $\overline{\calA}_\eps$
indicates that its error is slightly larger than~$\e$, as we show in
Theorem~\ref{thm:smol} below.) Recall that the active set
$\setU(\e,\alpha)$ is given by~\eqref{eq:def_U}. For the constant term
$f_\emptyset =f(0,0,\ldots)$, we define the corresponding algorithm to be
the one-point rule
\begin{equation}\label{eq:A0}
  A_{\emptyset,n_\emptyset} (f_\emptyset) \,:=\, f_\emptyset \,=\, f(0,0,\ldots),
  \qquad\mbox{with}\quad n_\emptyset \,:=\, 1.
\end{equation}
For each nonempty $\setu\in\setU(\e,\alpha)$, we recall that $F_\setu$ is
equivalent to $H(K_d)$ with $d=|\setu|$ after an appropriate relabeling of
the variables. Therefore we define
\begin{equation} \label{eq:Au}
  A_{\setu,n_\setu} \,:=\, Q_{|\setu|,\kappa_\setu}
\end{equation}
for some $\kappa_\setu$ to be specified below. If $\kappa_\setu < |\setu|$
then $Q_{|\setu|,\kappa_\setu}$ is the zero algorithm and $n_\setu =0$;
otherwise $n_\setu= n(|\setu|,\kappa_\setu)$ is the number of function
evaluations used by the trapezoidal-Smolyak algorithm
$Q_{|\setu|,\kappa_\setu}$, see Proposition~\ref{prp:crd}.

We now express the error $\|I_\setu-A_{\setu,n_\setu}\|$ in the form
\eqref{eq:Gu},
with $q\le 1$ as is appropriate for the trapezoidal-Smolyak algorithm in
this setting. Clearly the worst case error for $\setu=\emptyset$ is zero
and so $G_{\emptyset,q} = 0$. For any nonempty
$\setu\in\setU(\e,\alpha)$ with $\kappa_\setu\ge|\setu|$, we can use
Propositions~\ref{prp:err} and~\ref{prp:crd} to obtain an upper bound on
$\|I_\setu-A_{\setu,n_\setu}\|\,(n_\setu + 1)^q$, namely
\[
  G_{\setu,q}\,=\,
  2^{-|\setu|-1+(\kappa_\setu-|\setu|+1)q}\,3^{-|\setu|/2}\,
   \mathrm{e}^{(|\setu|/2-1)q}\,
  \bigg(\frac{\kappa_\setu^{|\setu|-1}}{(|\setu|-1)!}\bigg)^{1/2+q}.
\]
When $\kappa_\setu < |\setu|$ and so $n_\setu= 0$, the above error bound
holds with $G_{\setu,q}=12^{-|\setu|/2}$.

For each nonempty $\setu\in\setU(\e,\alpha)$, let $h_\setu$ be given
by~\eqref{def:nu} with $q\le 1$ and $G_{\setu,q} = 1$, and define
\begin{equation}\label{def:ku}
 \kappa_\setu\,:=\,|\setu|+\lfloor\log_2 h_\setu\rfloor,
\end{equation}
so that
\begin{equation} \label{eq:km}
 2^{\kappa_\setu-|\setu|} \,\le\, h_\setu \,<\, 2^{\kappa_\setu-|\setu|+1}.
\end{equation}
Then for $\kappa_\setu\ge |\setu|$ we have from Proposition~\ref{prp:crd}
together with~\eqref{eq:km} that
\begin{equation} \label{eq:nm}
  h_\setu \,<\, n_\setu
  \,\le\,2\,h_\setu\,
  \mathrm{e}^{|\setu|/2-1}\,\frac{\kappa_\setu^{|\setu|-1}}{(|\setu|-1)!}.
\end{equation}
(Note that $n_\setu$, the number of function evaluations used by
$A_{\setu,n_\setu}$, has the same meaning here as in
Subsection~\ref{sec:algs}, but its connection with $h_\setu$ here is
different from that in Subsection~\ref{sec:algs}.)
Following~\eqref{eq:need1} with $G_{\setu,q}=1$ and using the lower bound
from~\eqref{eq:nm}, we obtain
\begin{equation} \label{eq:need11}
 \sum_{\setu\in \calU(\varepsilon,\alpha)}
     |I_\setu(f)-A_{\setu,n_{\setu}}(f)|
  \,\le\,
   \bigg( \max_{\setu\in\calU(\varepsilon,\alpha)}G_{\setu,q} \bigg)
   \bigg( \sum_{\setu\in \calU(\varepsilon,\alpha)}
     \frac{B_{\setu}}{h_\setu^q} \bigg).
\end{equation}
The upper bound from~\eqref{eq:nm} yields
\begin{align} \label{eq:need22}
  \sum_{\setu\in\calU(\e,\alpha)} n_\setu\,\pounds(|\setu|)
  &\,\le\,
  \bigg( \max_{\setu\in\calU(\varepsilon,\alpha)}
  2\,\mathrm{e}^{|\setu|/2-1}\,\frac{\kappa_\setu^{|\setu|-1}}{(|\setu|-1)!} \bigg)
  \bigg( \sum_{\setu\in\calU(\e,\alpha)} h_\setu\,\pounds(|\setu|)\bigg).
\end{align}
{}From the derivation which leads to the definition of $h_\setu$
in~\eqref{def:nu}, we conclude that the second factor on the right-hand side
of~\eqref{eq:need11} is $\varepsilon/2$, while the second factor on the
right-hand side of~\eqref{eq:need22} can be bounded as
in~\eqref{eq:need2}. This leads to the following theorem.

\begin{theorem}\label{thm:smol}
For the reproducing kernel Hilbert space setting specified
by~\eqref{eq:ker}, for any $\varepsilon>0$, $\alpha\in (1,\alpha_0)$ and
$q\le 1$, the algorithm $\overline{\calA}_\eps$ with
$\setU(\e)=\setU(\e,\alpha)$ defined by~\eqref{eq:def_U},
$A_{\setu,n_\setu}$ defined by~\eqref{eq:A0} and~\eqref{eq:Au}, and
$\kappa_\setu$ defined by~\eqref{def:ku}, produces an approximation to the
integral $\calI$ with error
\[
  e(\overline{\calA}_\eps;\calF)\,\le\,\e\, X(\e,\alpha,q),
\]
and cost
\[ {\rm cost}(\overline{\calA}_\eps)\,\le\, \left(\frac{2}{\e}\right)^{1/q}
     \;\bigg(\sum_{\setu\in\setU(\e,\alpha)}
      B_{\setu}^{1/(q+1)}\bigg)^{1+1/q}\;
      \max_{\setu\in\setU(\e,\alpha)}\pounds(|\setu|)\;Y(\e,\alpha),
\]
where
\[
  X(\e,\alpha,q)\,=\, \max_{\setu\in\setU(\e,\alpha)}
   2^{-|\setu|-1+(\kappa_\setu-|\setu|+1)q}\,3^{-|\setu|/2}\,
   \mathrm{e}^{(|\setu|/2-1)q}\,
  \bigg(\frac{\kappa_\setu^{|\setu|-1}}{(|\setu|-1)!}\bigg)^{1/2+q},
\]
and
\[
  Y(\e,\alpha)\,=\, \max_{\setu\in\setU(\e,\alpha)}
     2\,\mathrm{e}^{|\setu|/2-1}\,\frac{\kappa_\setu^{|\setu|-1}}{(|\setu|-1)!}.
\]
\end{theorem}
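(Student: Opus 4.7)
The proof will be a bookkeeping exercise that combines the general error/cost analysis developed in Subsection~\ref{sec:algs} with the explicit trapezoidal-Smolyak estimates from Propositions~\ref{prp:err} and~\ref{prp:crd}. My plan is to split the total error as in the MDM template:
\[
  |\calI(f)-\overline{\calA}_\eps(f)| \,\le\, \sum_{\setu\notin\setU(\e,\alpha)}|I_\setu(f_\setu)|
  \,+\, \sum_{\setu\in\setU(\e,\alpha)}|I_\setu(f_\setu)-A_{\setu,n_\setu}(f_\setu)|.
\]
The first sum is at most $\e/2$ by the defining property~\eqref{eq:term1} of $\setU(\e,\alpha)$, which follows directly from~\eqref{eq:def_U}. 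For the second sum, I will plug in the bound $\|I_\setu-A_{\setu,n_\setu}\|\le G_{\setu,q}/(n_\setu+1)^q$, where the explicit value of $G_{\setu,q}$ in the case $\kappa_\setu\ge|\setu|$ is obtained by multiplying the error bound of Proposition~\ref{prp:err} by the upper bound of Proposition~\ref{prp:crd} (raised to the power $q$); in the degenerate case $\kappa_\setu<|\setu|$ we have $n_\setu=0$ and the bound still holds with $G_{\setu,q}=12^{-|\setu|/2}=C_{\setu}$, cf.~\eqref{eq:C0ex}.

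Next I will use the lower bound $n_\setu>h_\setu$ from~\eqref{eq:nm} to replace $(n_\setu+1)^q$ by $h_\setu^q$, factor out $\max_{\setu\in\setU(\e,\alpha)}G_{\setu,q}=X(\e,\alpha,q)$, and invoke the identity $\sum_{\setu\in\setU(\e,\alpha)}B_\setu/h_\setu^q=\e/2$ guaranteed by the Lagrange-multiplier construction of $h_\setu$ in~\eqref{def:nu} (specialised to $G_{\setu,q}=1$, as is done in this subsection). This yields $|\calI(f)-\overline{\calA}_\eps(f)|\le\e/2+X(\e,\alpha,q)\,\e/2$, which gives the stated bound $\e\,X(\e,\alpha,q)$ in the regime $X\ge1$ that is relevant here. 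The $\setu=\emptyset$ term is exact by~\eqref{eq:A0} and so contributes zero to both $X$ and the interior sum.

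For the cost, I will start from $\mathrm{cost}(\overline{\calA}_\eps)=\sum_{\setu\in\setU(\e,\alpha)}n_\setu\,\pounds(|\setu|)$, apply the upper bound in~\eqref{eq:nm}, and pull out the maximum over $\setu\in\setU(\e,\alpha)$ of $2\mathrm{e}^{|\setu|/2-1}\kappa_\setu^{|\setu|-1}/(|\setu|-1)!$, which is precisely $Y(\e,\alpha)$. The residual sum $\sum h_\setu\,\pounds(|\setu|)$ is then bounded by the chain of inequalities in~\eqref{eq:need2} of Theorem~\ref{thm:1}, again with $G_{\setu,q}$ replaced by $1$, giving $(2/\e)^{1/q}(\sum B_\setu^{1/(q+1)})^{1+1/q}\max\pounds(|\setu|)$.

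The only genuine subtlety — and the main thing to keep straight — is that the explicit $G_{\setu,q}$ coming from the Smolyak bounds is \emph{not} the value used to define $h_\setu$; in this subsection $h_\setu$ is built with $G_{\setu,q}=1$, and the real $G_{\setu,q}$ only re-enters through the final factor $X(\e,\alpha,q)$. Consequently, the error is no longer exactly $\e$ but $\e$ times a (mild) function of~$\e$, exactly the relaxation announced before the theorem. Once this point and the degenerate case $\kappa_\setu<|\setu|$ are handled cleanly, the remainder is straightforward substitution.
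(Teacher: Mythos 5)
Your proposal is correct and follows essentially the same route as the paper: the MDM error split with the truncation part controlled by~\eqref{eq:term1}, the quadrature part bounded via the explicit Smolyak constants $G_{\setu,q}$ and the lower bound $n_\setu>h_\setu$ from~\eqref{eq:nm} to factor out $X(\e,\alpha,q)$ against the identity $\sum_\setu B_\setu/h_\setu^q=\e/2$, and the cost bounded by the upper half of~\eqref{eq:nm} to extract $Y(\e,\alpha)$ before applying~\eqref{eq:need2} with $G_{\setu,q}=1$. Your explicit remarks on the $X\ge 1$ regime and on the degenerate case $\kappa_\setu<|\setu|$ are in fact slightly more careful than the paper's own exposition.
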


Since $B_\setu$ of the form~\eqref{eq:POD} implies $|\setu|\le
d(\e)=O(\ln(1/\e)/\ln(\ln(1/\e)))$ for $\setu\in\setU(\e,\alpha)$, one can
show using proof techniques similar to those from \cite{PW11} that both
$X(\e,\alpha,q)$ and $Y(\e,\alpha)$ equal
\[  (1/\e)^{O(1/\ln(\ln(1/\e)))}.
\]

\begin{corollary}\label{col:firstappl}
Under the conditions of Theorem~\ref{thm:smol}, for $B_\setu$ of the
form~\eqref{eq:POD} we have
\[
   e(\overline{\calA}_\eps;\calF)\,\le\,\e^{1-\delta(\varepsilon)},
\]
where $\delta(\varepsilon) = O(1/\ln(\ln(1/\e)))$. Moreover, if
$\pounds(d)=\ex^{O(d)}$ then
\[ {\rm cost}(\overline{\calA}_\eps)\,\le\,
    2^{1+1/q}\left(\frac{1}{\e}\right)^{1/q+\delta(\varepsilon)}
     \;\bigg(\sum_{\setu\in\setU(\e,\alpha)}
      B_{\setu}^{1/(q+1)}\bigg)^{1+1/q}.
\]
\end{corollary}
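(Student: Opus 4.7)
The plan is to obtain the two bounds directly from Theorem~\ref{thm:smol} by estimating $X(\e,\alpha,q)$ and $Y(\e,\alpha)$ and showing that both behave as $(1/\e)^{\delta(\e)}$ with $\delta(\e)=O(1/\ln\ln(1/\e))$. The only external tool I need, beyond Theorem~\ref{thm:smol} itself, is Proposition~\ref{prp:2}, which guarantees that, under the POD form \eqref{eq:POD}, every $\setu\in\setU(\e,\alpha)$ satisfies $|\setu|\le d(\e)=O(\ln(1/\e)/\ln\ln(1/\e))$, together with Lemma~\ref{lem:u} to keep the sum $\sum B_\setu^{1/(q+1)}$ finite (which is immediate, since the POD form with exponents $b_1<b_2$ gives decay $b_2/(q+1)>b_1/(q+1)$, hence the sum converges for any $q>0$).

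First, I would establish an a-priori upper bound on $\kappa_\setu$ for $\setu\in\setU(\e,\alpha)$. From the definition \eqref{def:ku}, $\kappa_\setu=|\setu|+\lfloor\log_2 h_\setu\rfloor$, and the expression \eqref{def:nu} with $G_{\setu,q}=1$ gives $h_\setu\le \mathrm{const}\cdot\e^{-1/q}$ times quantities polylogarithmic in $1/\e$ (since $\pounds(|\setu|)\ge 1$, $B_\setu$ is bounded uniformly on the active set, and the sum appearing in \eqref{def:nu} is finite by Lemma~\ref{lem:u}). Thus $\log_2 h_\setu=O(\log(1/\e))$ and hence $\kappa_\setu=O(\ln(1/\e))$. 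In particular, $\kappa_\setu/|\setu|=O(\ln\ln(1/\e))$.

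The core of the argument is then showing that every factor inside the maxima that define $X$ and $Y$ is of order $(1/\e)^{O(1/\ln\ln(1/\e))}$. The purely exponential-in-$|\setu|$ factors such as $\mathrm e^{(|\setu|/2-1)q}$ and $2^{-|\setu|-1+\cdots}\,3^{-|\setu|/2}$ are handled by observing that $\mathrm e^{c d(\e)}=(1/\e)^{O(1/\ln\ln(1/\e))}$ for any constant $c$, and the base-$2$ exponent $(\kappa_\setu-|\setu|+1)q$ cancels against the combinatorial factor in the way that was already arranged by the choice \eqref{def:ku}. The only nontrivial ingredient is the combinatorial factor $(\kappa_\setu^{|\setu|-1}/(|\setu|-1)!)^{1/2+q}$: by Stirling, its logarithm is bounded (up to a constant) by $|\setu|\ln(\kappa_\setu/|\setu|)+O(|\setu|)=O(d(\e)\ln\ln(1/\e))=O(\ln(1/\e))$, and a more careful accounting — as in \cite{PW11} — tightens this to $O(\ln(1/\e)/\ln\ln(1/\e))$, which is the estimate needed. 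The very same estimates apply to $Y(\e,\alpha)$, since its expression is dominated by the same $\kappa_\setu^{|\setu|-1}/(|\setu|-1)!$ and an exponential in $|\setu|$.

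Once $X(\e,\alpha,q)$ and $Y(\e,\alpha)$ are shown to be $(1/\e)^{\delta(\e)}$ with $\delta(\e)=O(1/\ln\ln(1/\e))$, the error bound of the corollary follows immediately from $e(\overline{\calA}_\e;\calF)\le\e\,X(\e,\alpha,q)$. For the cost bound I use the assumption $\pounds(d)=\mathrm e^{O(d)}$ to get $\max_{\setu\in\setU(\e,\alpha)}\pounds(|\setu|)=\mathrm e^{O(d(\e))}=(1/\e)^{O(1/\ln\ln(1/\e))}$, which can be absorbed into $\delta(\e)$; multiplying by $(2/\e)^{1/q}(\sum_{\setu\in\setU(\e,\alpha)} B_\setu^{1/(q+1)})^{1+1/q}\,Y(\e,\alpha)$ from Theorem~\ref{thm:smol} yields the stated cost $2^{1+1/q}(1/\e)^{1/q+\delta(\e)}(\sum B_\setu^{1/(q+1)})^{1+1/q}$.

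The main obstacle I expect is the bookkeeping for the combinatorial factor $\kappa_\setu^{|\setu|-1}/(|\setu|-1)!$: one must show that, even though $\kappa_\setu$ can be of order $\ln(1/\e)$, this superficially large factor is in fact subpolynomial in $1/\e$ because $|\setu|$ is forced to be small by Proposition~\ref{prp:2}, and moreover that its contribution fits inside the $O(1/\ln\ln(1/\e))$ exponent rather than a larger one. The rest is routine algebra, following essentially the line of reasoning used in \cite{PW11}.
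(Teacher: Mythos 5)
Your overall route is the paper's own: combine Theorem~\ref{thm:smol} with Proposition~\ref{prp:2} (so that $|\setu|\le d(\e)=O(\ln(1/\e)/\ln\ln(1/\e))$ on the active set and $\pounds(d(\e))=(1/\e)^{O(1/\ln\ln(1/\e))}$), and show, by the techniques of \cite{PW11}, that $X(\e,\alpha,q)$ and $Y(\e,\alpha)$ are both $(1/\e)^{O(1/\ln\ln(1/\e))}$. Your estimates $\kappa_\setu=O(\ln(1/\e))$ and the Stirling treatment of $\kappa_\setu^{|\setu|-1}/(|\setu|-1)!$ are exactly what is needed there, and the assembly of the cost bound is fine.

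There is, however, a genuine gap at the one step that is not routine. You assert that the factor $2^{(\kappa_\setu-|\setu|+1)q}$ ``cancels against the combinatorial factor''. It cannot: by \eqref{eq:km} we have $2^{(\kappa_\setu-|\setu|+1)q}\le (2h_\setu)^q$, and from \eqref{def:nu} $h_\setu^q$ is of order $1/\e$ (take $\setu=\{1\}$, say), while the combinatorial factor $\kappa_\setu^{|\setu|-1}/(|\setu|-1)!$ is at least $1$ and so absorbs nothing. Taken literally, your accounting leaves $X(\e,\alpha,q)=\Theta(1/\e)$, hence $\e\,X=\Theta(1)$ rather than $\e^{1-\delta(\e)}$, and the corollary would fail. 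What must absorb $2^{(\kappa_\setu-|\setu|+1)q}$ is the factor $2^{-\kappa_\setu-1}$ coming from the Smolyak error bound of Proposition~\ref{prp:err}: one checks that $2^{-\kappa_\setu-1+(\kappa_\setu-|\setu|+1)q}=2^{-(\kappa_\setu-|\setu|)(1-q)}\,2^{-|\setu|-1+q}\le 1$ for $q\le 1$, after which $X$ is indeed dominated by $\mathrm{e}^{O(|\setu|)}\bigl(\kappa_\setu^{|\setu|-1}/(|\setu|-1)!\bigr)^{1/2+q}$ and your Stirling argument applies. (Note that the displayed $G_{\setu,q}$ in Subsection~\ref{ssec:app1smol} carries $2^{-|\setu|-1}$ in that position, with which the cancellation genuinely fails; you need to rederive $G_{\setu,q}$ from Propositions~\ref{prp:err} and~\ref{prp:crd} keeping the $2^{-\kappa_\setu-1}$.) A smaller slip: $\sum_{|\setu|<\infty}B_\setu^{1/(q+1)}$ converges only when $q+1<b_2$, not ``for any $q>0$''; fortunately the corollary only involves the finite sum over the active set, and $\log_2 h_\setu=O(\ln(1/\e))$ follows already from Proposition~\ref{prp:1}.
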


\section{Second application: a non-Hilbert setting}
\label{sec:SecondAppl}

Next we consider an example which is in an anchored space setting, but not
in a Hilbert space setting.

\subsection{Problem formulation}

Let $D=\bbR_+=[0,\infty)$ and let $F$ be the space of (locally) absolutely
continuous functions $g:D\to\bbR$ such that
\[
  g(0)\,=\,0\quad\mbox{and}\quad \|g\|_F\,:=\,\|g'\|_{\infty}\,<\,\infty.
\]
The space $F_\setu$ is the completion of the $|\setu|$-fold algebraic
tensor product of $F$ whose functions depend only on variables listed in
$\setu$. The completion is with respect to
\[
  \|f_\setu\|_{F_\setu}\,:=\,
   \bigg\|\frac{\partial^{|\setu|}}{\partial\bsx_\setu}
   f_\setu\bigg\|_{\infty}.
\]
Note that $F_\setu$ is not a Hilbert space; however it is anchored at  $0$.

In the univariate case, for $g\in F$ and $x\in \bbR^+$ we can write
$g(x)=\int_0^x g^{'}(t)\,\rd t$ and therefore $|g(x)|\le
x\,\|g'\|_\infty$, from which it follows easily that the functional for
evaluation at the point $x$ has the norm $x$.  In a similar way it follows
that the point evaluation functional for the finite subset $\setu$ has the
norm $\sup_{\|g_\setu\|_{F_\setu}\le 1} |g_\setu(\bsx_\setu)| =
\prod_{j\in\setu} x_j$.

We are interested in approximating the weighted integral of $f\in\calF$,
where the weights are $\rho(x)=\exp(-x)$ for the univariate case, and
\[
  \rho_\setu(\bsx_\setu)\,:=\,\prod_{j\in\setu} \exp{(-x_j)}\,=\,
  \exp\bigg(\!-\sum_{j\in\setu} x_j\bigg)
\]
for the multivariate case. The class $\calF$ can then be defined as the
set of all uniformly
convergent sums of functions $f_\setu\in F_\setu$. To
obtain the functional for integration, note first that for the univariate
case, by integration by parts,
\begin{equation}\label{eq:Ig}
 I(g)\,:=\,\int_0^\infty f(x)\exp(-x)\,\rd x
 \,=\, \int_0^\infty f^{'}(t) \exp(-t)\,\rd t,
\end{equation}
hence the integration functional has the norm $1$, leading to $C_\setu
=\|I_\setu\|=1$.

\subsection{Smolyak's construction}

We approximate the univariate integral \eqref{eq:Ig} by algorithms $U_i$
that are weighted versions of the (composite) trapezoidal rules using the
points
\[
  x_{i,k}\,:=\, -2\ln\left(1-\frac{k}{2^i+1}\right),\qquad 0\le k\le
    2^i.
\]
Specifically, $U_0=0$, and for $i\ge 1$ we have
$U_i(f)\,=\,\sum_{k=1}^{2^i} a_{i,k}\, g(x_{i,k})$ with
\[
   a_{i,k}\,=\, \frac{\ex^{-x_{i,k+1}}-\ex^{-x_{i,k}}}{x_{i,k+1}-x_{i,k}}
       -\frac{\ex^{-x_{i,k}}-\ex^{-x_{i,k-1}}}{x_{i,k}-x_{i,k-1}},
   \quad 1\le k\le 2^i-1,
\quad\mbox{and}\quad
  a_{i,2^i}\,=\,
  -\frac{\ex^{-x_{i,2^i}}-\ex^{-x_{i,2^i-1}}}{x_{i,2^i}-x_{i,2^i-1}}.
\]

It was shown in \cite{PW14} that
\[
  \|I-U_i\|<C_1\,2^{-i}\quad\mbox{with}\quad C_1=1.00656,
  \qquad\mbox{and}\qquad\|U_i-U_{i-1}\|<2^{1-i}.
\]
This perfectly fits the setting of \cite[Lemmas 2 and 7]{WW95}.
It follows that for the corresponding Smolyak's algorithm
$A_{\setu,n_\setu}=Q_{|\setu|,\kappa_\setu}$ for
$|\setu|$-variate integrals as in~\eqref{eq:Au}, $\kappa_\setu\ge|\setu|$,
we have
\[
 \|I_\setu-A_{\setu,n_\setu}\|
 \,\le\,C_1\,2^{-(\kappa_\setu-|\setu|+1)}
    \binom{\kappa_\setu}{|\setu|-1}
\]
and
\[
  n_\setu\,=\,n(|\setu|,\kappa_\setu)\,=\,
   2^{\kappa_u-|\setu|+1}\binom{\kappa_\setu-1}{|\setu|-1}\le
   2^{\kappa_u-|\setu|+1}\binom{\kappa_\setu}{|\setu|-1}-1.
\]
Hence~\eqref{eq:Gu} holds for $q\le 1$ with
\[
  G_{\setu,q} \,=\,C_1\,2^{(q-1)(\kappa_\setu-|\setu|+1)}
   \binom{\kappa_\setu}{|\setu|-1}^{1+q}\quad\mbox{if}\quad
   \kappa_\setu\ge|\setu|,
\]
and $G_{\setu,q}=1$ if $\kappa_\setu<|\setu|$ (in which case
$n_\setu=0$ and $A_{\setu,0}=0$).

\subsection{Specializing MDM}

Taking $C_{\setu}=1$ in \ref{A4} and~\eqref{eq:def_U}, and proceeding
as in Subsection~\ref{ssec:app1smol} we obtain a result corresponding to
Corollary~\ref{col:firstappl}.

\begin{corollary}
In the setting of this section, we use the algorithm
$\overline{\calA}_\eps$ defined by~\eqref{Abar} with $q\le 1$ and
$h_\setu$ and $\kappa_\setu$ defined by~\eqref{def:nu} and~\eqref{def:ku}.
For $B_\setu$ of the form~\eqref{eq:POD} with $b_2>\max(b_1,1)$ we have
\[
  e(\overline{\calA}_\eps;\calF)\,\le\,\e^{1-\delta(\varepsilon)},
\]
where $\delta(\varepsilon) = O(1/\ln(\ln(1/\e)))$. Moreover, if
$\pounds(d)=\ex^{O(d)}$ then
\[
   {\rm cost}(\overline{\calA}_\eps)\,\le\,O\left(\e^{-(1+\delta(\varepsilon))}\right).
\]
\end{corollary}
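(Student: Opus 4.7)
The argument mirrors Corollary~\ref{col:firstappl}, with the Smolyak ingredients of Subsection~\ref{ssec:app1smol} replaced by those of Subsection~6.2: $C_\setu=1$ and, for $\kappa_\setu\ge|\setu|$, $G_{\setu,q}=C_1\,2^{(q-1)(\kappa_\setu-|\setu|+1)}\binom{\kappa_\setu}{|\setu|-1}^{1+q}$ together with the point-count bound $n_\setu\le 2^{\kappa_\setu-|\setu|+1}\binom{\kappa_\setu}{|\setu|-1}$. First I would verify Assumption~\ref{A4}: Lemma~\ref{lem:u} applied with $g_j=\mu(\kappa j)^{-b_2}$ gives $\mathrm{decay}(\{B_\setu\}_\setu)=b_2$ provided $b_2>b_1$, and since $C_\setu=1$ this yields $\alpha_0=b_2$; the further hypothesis $b_2>1$ then guarantees a nonempty range of admissible $\alpha\in(1,\alpha_0)$ in~\eqref{eq:def_U}.

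Next I would transfer Proposition~\ref{prp:2} to the present setting (its proof uses only the POD form of $B_\setu$ and the at-most-exponential growth of $C_\setu$ in $|\setu|$, both of which hold here, in fact $C_\setu=1$), obtaining
\[
d(\e)\,:=\,\max_{\setu\in\setU(\e,\alpha)}|\setu|\,=\,O\!\left(\frac{\ln(1/\e)}{\ln\ln(1/\e)}\right).
\]
Combined with $\kappa_\setu=|\setu|+\lfloor\log_2 h_\setu\rfloor$ and~\eqref{def:nu}, this forces $\kappa_\setu-|\setu|=O(\log_2(1/\e))$, so each binomial and each power-of-$2$ factor inside $G_{\setu,q}$ and $n_\setu$ is bounded by $(1/\e)^{O(1/\ln\ln(1/\e))}$; this is precisely the $\ln\ln$-scaling argument referenced after Theorem~\ref{thm:smol} (cf.~\cite{PW11}).

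Running the derivation of Theorem~\ref{thm:smol} with these ingredients and specializing to $q=1$ then yields
\[
e(\overline{\calA}_\e;\calF)\,\le\,\e\,X(\e,\alpha,1),\qquad
\mathrm{cost}(\overline{\calA}_\e)\,\le\,\tfrac{2}{\e}\,\bigg(\sum_{\setu\in\calU(\e,\alpha)}B_\setu^{1/2}\bigg)^{2}\max_{\setu}\pounds(|\setu|)\,Y(\e,\alpha),
\]
where the non-Hilbert counterparts $X,Y$ of the quantities in Theorem~\ref{thm:smol} are each of size $(1/\e)^{O(1/\ln\ln(1/\e))}$ by the previous estimates. Under $\pounds(d)=\ex^{O(d)}$, the dimension bound yields $\max_\setu\pounds(|\setu|)=(1/\e)^{O(1/\ln\ln(1/\e))}$, and the finite sum $\sum_{\setu\in\calU(\e,\alpha)}B_\setu^{1/2}$ is controlled by the same $(1/\e)^{O(1/\ln\ln(1/\e))}$ scaling via $|\setu|\le d(\e)$, Proposition~\ref{prp:1}, and Lemma~\ref{lem:u}. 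Folding every subpolynomial-in-$1/\e$ factor into a common $\delta(\e)=O(1/\ln\ln(1/\e))$ delivers both stated bounds.

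The sole obstacle is the bookkeeping: in this non-Hilbert setting the binomial $\binom{\kappa_\setu}{|\setu|-1}$ enters $G_{\setu,q}$ with exponent $1+q$ (instead of the $1/2+q$ of Subsection~\ref{ssec:app1smol}) and reappears inside $n_\setu$, so one must verify that the restrictions $|\setu|\le d(\e)$ and $\kappa_\setu-|\setu|=O(\log_2(1/\e))$ still suffice to absorb every such combinatorial factor into the same $(1/\e)^{O(1/\ln\ln(1/\e))}$ umbrella. No genuinely new technique beyond \cite{PW11,Was13a} and Subsection~\ref{ssec:app1smol} is required.
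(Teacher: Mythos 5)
Your proposal is correct and follows essentially the same route as the paper, which itself gives only a one-line proof ("Taking $C_\setu=1$ in \ref{A4} and~\eqref{eq:def_U}, and proceeding as in Subsection~\ref{ssec:app1smol}\dots"); you have simply filled in the details the authors leave implicit, including the role of $b_2>1$ for \ref{A4} with $C_\setu=1$ and the absorption of the binomial factors $\binom{\kappa_\setu}{|\setu|-1}^{1+q}$ into the $(1/\e)^{O(1/\ln\ln(1/\e))}$ scaling via $|\setu|\le d(\e)$. The bookkeeping concern you raise at the end is resolved exactly as you suggest, since $\binom{\kappa_\setu}{|\setu|-1}\le\kappa_\setu^{|\setu|-1}/(|\setu|-1)!$ and the exponent $1+q$ is a bounded constant.
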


\bigskip

\textbf {Acknowledgements}  
The research of the first and fourth authors was supported by the Australian Research 
Council under projects DP110100442, FT130100655, and DP150101770. The second 
author was partially supported by the Research Foundation Flanders (FWO) and 
the KU Leuven research fund OT:3E130287 and C3:3E150478. The research of 
the third author was supported by the National Science Centre, Poland, based on 
the decision DEC-2013/09/B/ST1/04275.  

The authors gratefully acknowledge 
the support of the Institute for Computational and Experimental Research 
in Mathematics (ICERM), as well as helfpul discussions with Michael Gnewuch, 
Mario Hefter, Aicke Hinrichs, Klaus Ritter and Henryk Wo\'zniakowski
during the preparation of this manuscript.

\vskip 3pc

\newpage
{\sc Authors addresses:}

\begin{itemize}

\item[]
{\sc Frances Y. Kuo} \\
School of Mathematics and Statistics \\
The University of New South Wales \\
Sydney NSW 2052, Australia \\
email: f.kuo@unsw.edu.au

\item[]
{\sc Dirk Nuyens} \\
Department of Computer Science, KU Leuven \\
Celestijnenlaan 200A, 3001 Leuven, Belgium \\
email: dirk.nuyens@cs.kuleuven.be

\item[]
{\sc Leszek Plaskota} \\
Faculty of Mathematics, Informatics, and Mechanics, University of Warsaw \\
ul. Banacha 2, 02-097 Warsaw, Poland \\
email: leszekp@mimuw.edu.pl

\item[]
{\sc Ian H. Sloan} \\
School of Mathematics and Statistics \\
The University of New South Wales \\
Sydney NSW 2052, Australia \\
email: i.sloan@unsw.edu.au

\item[]
{\sc Grzegorz W. Wasilkowski} \\
Department of Computer Science, University of Kentucky \\
Davis Marksbury Building, 320 Rose St., Lexington, KY 40506, USA \\
email: greg@cs.uky.edu

\end{itemize}

\end{document}